\newdimen\bibspace
\renewenvironment{thebibliography}[1]{%
 \section*{\refname 
       \@mkboth{\MakeUppercase\refname}{\MakeUppercase\refname}}%
     \list{\@biblabel{\@arabic\c@enumiv}}%
          {\settowidth\labelwidth{\@biblabel{#1}}%
           \leftmargin\labelwidth
           \advance\leftmargin\labelsep
           \itemsep\bibspace
           \parsep\z@skip     %
           \@openbib@code
           \usecounter{enumiv}%
           \let\p@enumiv\@empty
           \renewcommand\theenumiv{\@arabic\c@enumiv}}%
     \sloppy\clubpenalty4000\widowpenalty4000%
     \sfcode`\.\@m}
    {\def\@noitemerr
      {\@latex@warning{Empty `thebibliography' environment}}%
     \endlist}
\newtheorem{thm}{Theorem}[section]
\newtheorem{prop}[thm]{Proposition}
\def\XXint#1#2#3{{\setbox0=\hbox{$#1{#2#3}{\int}$}
  \vcenter{\hbox{$#2#3$}}\kern-.5\wd0}}
                \newcommand{\lda}{\lambda}
\newcommand{\om}{\Omega}                \newcommand{\pa}{\partial}
\newcommand{\va}{\varepsilon}           \newcommand{\ud}{\mathrm{d}}
\newcommand{\be}{\begin{equation}}      \newcommand{\ee}{\end{equation}}
\newcommand{\R}{\mathbb{R}}
\begin{document}

\title{\textbf{On the sharp constants in the regional fractional Sobolev inequalities}
\bigskip}

\author{\medskip  Rupert L. Frank\footnote{R. L. F. was partially supported by the US National Science Foundation grant DMS-1954995 and the Deutsche Forschungsgemeinschaft (DFG, German Research Foundation) through Germany’s Excellence Strategy EXC-2111-390814868}, \quad Tianling Jin\footnote{T. J. was partially supported by NSFC 12122120 and Hong Kong RGC grant  GRF 16302519.}, \quad  Wei Wang}

\date{\today}

\maketitle

\begin{abstract}
In this paper, we study the sharp constants in fractional Sobolev inequalities associated with the regional fractional Laplacian in domains. 
\end{abstract}

\section{Introduction} 

Let $n\ge 1$, $\sigma\in (0,1)$ (with the additional assumption that $\sigma<1/2$ if $n=1$), and $\Omega\subset\R^n$ be an open set. Consider the sharp constant of the fractional Sobolev inequality
\[
Y_{n,\sigma}(\Omega):= \inf\left\{I_{n,\sigma,\R^n}[u]: u \in C^\infty_c(\Omega), \int_{\Omega} |u|^\frac{2n}{n-2\sigma}\,\ud x=1 \right\}\,,
\]
where
\begin{equation}\label{eq:sobolevnorm1}
I_{n,\sigma,\R^n}[u] := \iint_{\R^n\times\R^n} \frac{(u(x)-u(y))^2}{|x-y|^{n+2\sigma}} \,\ud x\,\ud y
\end{equation}
is the fractional Sobolev semi-norm of $u$. Using the dilation and translation invariance of $Y_{n,\sigma}(\R^n)$, it is not difficult to see that $
Y_{n,\sigma}(\Omega) = Y_{n,\sigma}(\R^n)$. Moreover, Lieb \cite{Lieb} classifies all the minimizers for $Y_{n,\sigma}(\R^n)$ and shows that they do not vanish anywhere on $\R^n$. Therefore, the infimum $Y_{n,\sigma}(\Omega)$ is not attained unless $\Omega=\R^n$.

Together with Xiong, the first two authors in \cite{FJX} considered the sharp constant of the fractional Sobolev inequality on the domain $\Omega$:
\begin{equation}\label{eq:sobolevconstant2}
S_{n,\sigma}(\Omega):= \inf\left\{I_{n,\sigma,\Omega}[u]: u \in C^\infty_c(\Omega),  \int_{\Omega} |u|^\frac{2n}{n-2\sigma}\,\ud x=1\right\},
\end{equation}
where 
\begin{equation}\label{eq:sobolevnorm2}
I_{n,\sigma,\Omega}[u] := \iint_{\Omega\times\Omega} \frac{(u(x)-u(y))^2}{|x-y|^{n+2\sigma}} \,\ud x\,\ud y
\end{equation}
is another fractional Sobolev semi-norm for $u$. In probability, $I_{n,\sigma,\Omega}$ defined in \eqref{eq:sobolevnorm2} is called the Dirichlet form of the censored $2\sigma$-stable process \cite{BBC} in $\Omega$. Its generator 
\begin{equation}\label{eq:regional}
(-\Delta)_{\Omega}^{\sigma}u:=2 \lim_{\va\to 0} \int_{\{y\in\Omega:\ |y-x|\ge \va\}} \frac{u(x)-u(y)}{|x-y|^{n+2\sigma}}\,\ud y
\end{equation}
is usually called the regional fractional Laplacian \cite{Guan, GuanMa}. Therefore, in this paper, we call \eqref{eq:sobolevnorm2} as the regional fractional Sobolev semi-norm of $u$. When $S_{n,\sigma}(\Omega)>0$, we call \eqref{eq:sobolevconstant2} the sharp constant of the regional fractional Sobolev inequality in $\Omega$. It follows from \cite{DyFr} that if $n\geq 2$ and $\sigma>1/2$, then $S_{n,\sigma}(\Omega)>0$. If $\sigma<1/2$ and $\Omega$ is of finite measure with sufficiently regular boundary, then Lemma 16 in \cite{FJX} shows that $S_{n,\sigma}(\Omega)=0$. If $\sigma<1/2$ and $\Omega$ is the complement of the closure of a bounded Lipschitz domain or a domain above the graph of a Lipschitz function, then it follows from the fractional Sobolev inequality on $\R^n$ and the Hardy inequality in \cite{Dy} that $S_{n,\sigma}(\Omega)>0$.  

It was discovered in \cite{FJX}  that  the minimization problem for $S_{n,\sigma}(\Omega)$ behaves differently from that for $Y_{n,\sigma}(\Omega)$. Unlike $Y_{n,\sigma}(\Omega)$, which always equals to $Y_{n,\sigma}(\R^n)$ and is never achieved unless $\Omega=\R^n$, the constant $S_{n,\sigma}(\Omega)$ depends on the domain $\Omega$, and can be achieved in many cases assuming that $n\ge 4\sigma$:
 
\begin{itemize}
\item If  the complement $\Omega^c$ has an interior point, then  
$
S_{n,\sigma}(\om)<S_{n,\sigma}(\R^n). 
$
\item If $\sigma\neq 1/2$, then $S_{n,\sigma}(\R^n_+)$ is achieved (see also Musina-Nazarov\cite{MN}).     
\item If $\sigma>1/2$, $\Omega$ is a bounded domain such that $B_\va^+\subset\Omega\subset \R^n_+$ for some $\va>0$, 
then $
S_{n,\sigma}(\om)<S_{n,\sigma}(\R^n_+).
$
Moreover, if $\partial \Omega $ is smooth then $S_{n,\sigma}(\om)$ is achieved.
\end{itemize}
Here, we used the notations that $\R^n_+=\{x=(x',x_n)\in\R^n: x_n>0\}$, $B_r=\{x\in\R^n: |x|<r\}$ and $B_r^+=B_r\cap\R^n_+$. 

Recently, Fall-Temgoua \cite{FT} proved that if $\Omega$ is a bounded $C^1$ domain and $\sigma>1/2$ is very close to $1/2$, then $ S_{n,\sigma}(\om)<S_{n,\sigma}(\R^n_+) $, and consequently,  $S_{n,\sigma}(\om)$ is achieved, by showing the upper semicontinuity of $S_{n,\sigma}(\om)$ on $\sigma\in[1/2,1)$ and using the fact that $S_{n,1/2}(\om)=0$.

As explained in \cite{FJX}, the discrepancy between the $S_{n,\sigma}(\Omega)$ and $Y_{n,\sigma}(\Omega)$ problems can be explained as a Br\'ezis-Nirenberg \cite{BrNi}  effect :
\begin{align*}
I_{n,\sigma,\Omega}[u] &= I_{n,\sigma,\R^n}[u] - 2\int_{\Omega}u^2(x)\,\ud x\int_{\R^n\setminus\Omega}\frac{1}{|x-y|^{n+2\sigma}} \,\ud y\\
& \approx I_{n,\sigma,\R^n}[u] - c_{n,\sigma}\int_{\Omega}\frac{u^2(x)}{\mbox{dist}(x,\partial\Omega)^{2\sigma}}\,\ud x\quad\forall\,u\in C_c^\infty(\Omega).
\end{align*}
Therefore, the $S_{n,\sigma}(\Omega)$ problem is the $Y_{n,\sigma}(\Omega)$ problem with an additional negative term, and  it is this term that for $n\ge4\sigma$ lowers the value of the infimum and produces a minimizer. This fact was first observed by  Br\'ezis-Nirenberg \cite{BrNi}. The difference between the $S_{n,\sigma}(\Omega)$ and $Y_{n,\sigma}(\Omega)$ problems is also related to the difference between the regional fractional Laplacian and the ``full'' fractional Laplacian on $\R^n$, and in turn by the nonlocal Hardy-type term's dependence on $\Omega$.

As mentioned earlier, it was proved in \cite{FJX}  that if $ n\geq 4\sigma $, $ 1/2<\sigma<1 $, and $\Omega$ is a smooth bounded domain such that $B_\va^+\subset\Omega\subset \R^n_+$ for some $\va>0$, then $
S_{n,\sigma}(\om)<S_{n,\sigma}(\R^n_+),
$
and consequently, $S_{n,\sigma}(\om)$ is achieved.  The assumption that $B_{\va}^+\subset\Omega $ means that the boundary of $ \Omega $ near the origin is flat. In this paper, we would like explore the strict inequality $S_{n,\sigma}(\om)<S_{n,\sigma}(\R^n_+)$ for non-flat domains.

\begin{thm}\label{thm:generaldomains}
Let $ n\geq 4 $, $ \frac{1}{2}<\sigma<1 $ and $ \Omega\subset\mathbb{R}^n $ be an open set. Suppose there exists a point $ a\in \partial\Omega $ such that $\partial\Omega$ is $C^3$ near the point $a$. Then there exist two positive constants $c$ and $C$, both of which depend only on $n,\sigma$ and $\Omega$, such that 
\[
S_{n,\sigma}(\Omega)\le S_{n,\sigma}(\mathbb{R}_+^n)-\frac{c \Gamma_0 H(a) }{\lambda}+C\lambda^{-2\sigma}
\]
for all large $\lambda$, where $H(a)$ is the mean curvature of $\partial \Omega$ at $a$, and \begin{align}\label{eq:integralsign}
\Gamma_0:=\iint_{\mathbb{R}_+^n\times\mathbb{R}_+^n}\frac{(\xi_n-\zeta_n)(|\xi'|^2-|\zeta'|^2)|\Theta(\xi)-\Theta(\zeta)|^2}{|\xi-\zeta|^{n+2\sigma+2}}\ud\xi \ud\zeta<+\infty
\end{align}
with $\Theta$ being a minimizer of $S_{n,\sigma}(\mathbb{R}_+^n)$ that is radially symmetric in the first $n-1$ variables. In particular, $S_{n,\sigma}(\Omega)\le S_{n,\sigma}(\mathbb{R}_+^n)$.
\end{thm}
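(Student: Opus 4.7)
The plan is to test the Rayleigh quotient for $S_{n,\sigma}(\Omega)$ against a rescaled minimizer of the half-space problem $S_{n,\sigma}(\mathbb{R}^n_+)$, transplanted to $\Omega$ through a boundary-flattening diffeomorphism near $a$, and then to expand the resulting quotient in inverse powers of the scaling parameter $\lambda$.

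First, I would translate and rotate so that $a = 0$, the inward normal of $\partial\Omega$ at $a$ is $e_n$, and $\Omega$ locally lies in $\{x_n > \phi(x')\}$ with $\phi \in C^3$ near the origin, $\phi(0) = 0$, $\nabla\phi(0) = 0$, and $\phi(x') = \tfrac{1}{2}\langle Ax',x'\rangle + O(|x'|^3)$; the symmetric matrix $A$ satisfies $\mathrm{tr}\,A = (n-1)H(a)$. The map $\Psi(\xi) := (\xi',\,\xi_n + \phi(\xi'))$ has unit Jacobian and straightens $\partial\Omega$ on a ball $B_r$. Fix a cutoff $\eta \in C_c^\infty(B_r)$ equal to $1$ on $B_{r/2}$, let $\Theta$ be a minimizer of $S_{n,\sigma}(\mathbb{R}^n_+)$ radially symmetric in the first $n-1$ variables (whose existence follows from \cite{FJX, MN}), and set
\[
\Theta_\lambda(\xi) := \lambda^{(n-2\sigma)/2}\Theta(\lambda\xi), \qquad u_\lambda(x) := \eta\bigl(\Psi^{-1}(x)\bigr)\,\Theta_\lambda\bigl(\Psi^{-1}(x)\bigr).
\]
For $\lambda$ large, $u_\lambda \in C_c^\infty(\Omega)$, and the unit Jacobian of $\Psi$ together with the decay of $\Theta$ yield $\|u_\lambda\|_{L^p(\Omega)}^p = \|\Theta\|_{L^p(\mathbb{R}^n_+)}^p + O(\lambda^{-n})$ with $p = 2n/(n-2\sigma)$.

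Second, I would expand the numerator $I_{n,\sigma,\Omega}[u_\lambda]$. Combining the change of variables $x = \Psi(\xi)$, $y = \Psi(\zeta)$ with the identity
\[
|\Psi(\xi) - \Psi(\zeta)|^2 = |\xi-\zeta|^2 + 2(\xi_n-\zeta_n)(\phi(\xi')-\phi(\zeta')) + (\phi(\xi')-\phi(\zeta'))^2,
\]
the binomial expansion of $|\Psi(\xi)-\Psi(\zeta)|^{-(n+2\sigma)}$ around $|\xi-\zeta|^{-(n+2\sigma)}$, and the rescaling $\xi = \tilde\xi/\lambda$ produces the leading $O(\lambda^{-1})$ correction
\[
-\frac{n+2\sigma}{2\lambda}\iint_{\mathbb{R}^n_+\times\mathbb{R}^n_+}\frac{(\Theta(\tilde\xi)-\Theta(\tilde\zeta))^2\,(\tilde\xi_n-\tilde\zeta_n)\bigl(\langle A\tilde\xi',\tilde\xi'\rangle - \langle A\tilde\zeta',\tilde\zeta'\rangle\bigr)}{|\tilde\xi-\tilde\zeta|^{n+2\sigma+2}}\,\mathrm{d}\tilde\xi\,\mathrm{d}\tilde\zeta.
\]
The radial symmetry of $\Theta$ in $x'$ lets me replace the matrix $A$ by $\tfrac{\mathrm{tr}\,A}{n-1}\,I = H(a)\,I$, turning the bracket into $|\tilde\xi'|^2 - |\tilde\zeta'|^2$ and producing the factor $cH(a)\Gamma_0$ with $c = (n+2\sigma)/2$.

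Two points remain. The first is the finiteness of $\Gamma_0$: near the diagonal, $(\Theta(\tilde\xi)-\Theta(\tilde\zeta))^2$ and $(\tilde\xi_n-\tilde\zeta_n)(|\tilde\xi'|^2-|\tilde\zeta'|^2)$ together cancel the singularity sufficiently, while at infinity the decay $\Theta(\tilde\xi) = O((1+|\tilde\xi|)^{-(n-2\sigma)})$ (from \cite{FJX,MN}) combined with the $|\tilde\xi-\tilde\zeta|^{-(n+2\sigma+2)}$ kernel guarantees convergence. The second and more delicate step is the error analysis: the cubic Taylor remainder of $\phi$, the quadratic piece of the kernel expansion, the cutoff-truncation of $\Theta_\lambda$, and the nonlocal ``mixed'' interactions between $\mathrm{supp}(u_\lambda)$ and $\Omega \setminus \Psi(B_r)$ must each be bounded by $C\lambda^{-2\sigma}$. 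I expect the control of these cutoff/tail pieces, interacting with the nonlocal seminorm, to be the main technical obstacle; it relies on the precise decay of $\Theta$ together with standard fractional-Sobolev cutoff estimates. Dividing the expansion of $I_{n,\sigma,\Omega}[u_\lambda]$ by $\|u_\lambda\|_{L^p(\Omega)}^2$ (equal, to leading order, to $\|\Theta\|_{L^p(\mathbb{R}^n_+)}^2$) yields the stated inequality, and the ``in particular'' statement follows by letting $\lambda \to \infty$.
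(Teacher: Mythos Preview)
Your proposal is correct and follows essentially the same route as the paper: flatten the boundary via $\Psi$, test with a cut-off rescaled half-space minimizer $\eta\Theta_\lambda$, Taylor-expand the transported kernel, and exploit the radial symmetry of $\Theta$ in $x'$ to replace the Hessian $A$ by $H(a)I$, yielding the $\Gamma_0 H(a)/\lambda$ term. One caution on the technical endgame: the decay you quote, $\Theta(\xi)=O((1+|\xi|)^{-(n-2\sigma)})$, is weaker than what \cite{FJX} actually gives, namely $\Theta(\xi)\asymp \xi_n^{2\sigma-1}(1+|\xi|)^{-(n+2\sigma-2)}$ together with a $C^1$ bound on $\xi_n^{1-2\sigma}\Theta$, and the paper needs these sharper estimates (its Proposition~\ref{lem:minimizerbound} and Proposition~\ref{prop:integrability}) to establish both the finiteness of $\Gamma_0$ and the $O(\lambda^{-2\sigma})$ error when $n=4$.
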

 We do not know what the sign of $\Gamma_0$ is or whether it is zero, and we leave it as an open question.  We do not have an explicit form of $\Theta$. Some of its estimates are given in Proposition \ref{lem:minimizerbound}.

Since $ S_{n,\sigma}(\Omega) $ is preserved under reflections, rotations, translations and dilations, we can assume that $ a$ is the origin $0$. The smoothness condition assumed in Theorem \ref{thm:generaldomains} indicates that if the principal curvatures of $ \partial\Omega $ at $ 0 $ are denoted as $ \alpha_{i}\ (i=1,2,...,n-1) $, then the boundary $ \partial\Omega $ near  the origin can be represented (up to rotating coordinates if necessary) by
\begin{align}\label{eq:graphfunction}
x_n=h(x')=\frac{1}{2}\sum_{i=1}^{n-1}\alpha_i x_i^2+g(x')|x'|^2,
\end{align}
where $ g $ is a bounded Lipschitz continuous function of the $x'$ variables defined in a small ball in $\R^{n-1}$ such that $g(0)=0$.  To prove Theorem \ref{thm:generaldomains}, we first flatten the boundary near the point $ a $, and then we use a cut-off of a rescaled minimizer of $S_{n,\sigma}(\R^n_+)$ as a test function.

To prove the strict inequality $ S_{n,\sigma}(\Omega)< S_{n,\sigma}(\mathbb{R}_{+}^n) $ without knowing  the sign of $\Gamma_0$, we need a global smallness condition, that is, we need to assume that part of the boundary $\Omega$ is represented by the function in \eqref{eq:graphfunction} with small $\alpha_i$, and $\Omega$ is above its graph.
\begin{thm}\label{thm:generaldomains2}
Let $ n\geq 4 $, $ \frac{1}{2}<\sigma<1 $, $\alpha_1,\cdots,\alpha_{n-1}$ be real numbers, $ g $ be a locally Lipschitz continuous function on $\R^{n-1}$  such that $g(0)=0$,  $h$ be defined as in \eqref{eq:graphfunction}, and $$\mathscr{R}:=\{x=(x',x_n)\in \R^n: x_n>h(x')\}.$$ Let $ \Omega\subset\R^n$ be an open set such that for some $\delta_0>0$ and $R_0>0$, 
\[
(B_{\delta_0}\cap \mathscr{R}) \subset\Omega \subset (\{x\in\R^{n}: |x'|< R_0\}\cap \mathscr{R}).
\]
Then there exists a positive constant $\varepsilon_0$ depending only on $n,\sigma,R_0$ and $\delta_0$ such that if 
\[
|\nabla_{x'}g(x')|\le \varepsilon_0\ \mbox{ for every } |x'|<R_0, \mbox{ and } |\alpha_i|\le\varepsilon_0 \ \mbox{ for every } i=1,\cdots, n-1,
\]
then
 \[
 S_{n,\sigma}(\Omega)< S_{n,\sigma}(\mathbb{R}_{+}^n) .
 \]
\end{thm}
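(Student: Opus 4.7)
The plan is to exhibit an explicit test function built from a concentrating, cut-off minimizer of $S_{n,\sigma}(\R^n_+)$ pulled back through a boundary-straightening diffeomorphism, and to show that a Br\'ezis--Nirenberg-type negative contribution of size $\lambda^{-2\sigma}$ arising from the strict inclusion $\Omega\subsetneq\mathscr{R}$ dominates all the curvature corrections --- in particular the mean-curvature term of Theorem~\ref{thm:generaldomains} whose sign is not at our disposal --- once the smallness hypothesis is imposed. After translation we may assume the reference point is the origin. Let $\Phi\colon\R^n_+\to\mathscr{R}$, $\Phi(y)=(y',y_n+h(y'))$, be the boundary-straightening diffeomorphism (Jacobian $1$), $\Theta$ the radially-symmetric-in-$x'$ minimizer of $S_{n,\sigma}(\R^n_+)$ from \eqref{eq:integralsign} normalized by $\|\Theta\|_{L^{2n/(n-2\sigma)}}=1$, and $\Theta_\lambda(y):=\lambda^{(n-2\sigma)/2}\Theta(\lambda y)$. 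Fix $\eta\in C_c^\infty(B_{\delta_0/2})$ with $\eta\equiv 1$ on $B_{\delta_0/4}$ and take
\[
u_\lambda(x):=\eta(x)\,\Theta_\lambda\bigl(\Phi^{-1}(x)\bigr),\qquad x\in\Omega,
\]
whose support lies in $B_{\delta_0/2}\cap\mathscr{R}\subset\Omega$. All subsequent work is organized around the identity
\[
I_{n,\sigma,\Omega}[u_\lambda]=I_{n,\sigma,\mathscr{R}}[u_\lambda]-2\int_{\Omega}u_\lambda(x)^2\int_{\mathscr{R}\setminus\Omega}\frac{\ud y}{|x-y|^{n+2\sigma}}\,\ud x,
\]
whose second term is the source of the Br\'ezis--Nirenberg negative contribution.

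For $I_{n,\sigma,\mathscr{R}}[u_\lambda]$, change variables via $\Phi$ to bring the double integral onto $\R^n_+\times\R^n_+$ and Taylor-expand the transformed kernel using
\[
|\Phi(y)-\Phi(z)|^2=|y-z|^2+2(y_n-z_n)(h(y')-h(z'))+(h(y')-h(z'))^2.
\]
The expansion is valid on the support since the smallness hypothesis gives $\|\nabla h\|_{L^\infty(B_{\delta_0/2})}=O(\varepsilon_0\delta_0)$. After rescaling $\xi=\lambda y$, using the radial symmetry of $\Theta$ in the first $n-1$ variables and the structural decomposition~\eqref{eq:graphfunction}, the leading correction is exactly the $\Gamma_0$-term of Theorem~\ref{thm:generaldomains}, proportional to $\tfrac{1}{n-1}\sum_i\alpha_i$. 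The bounds $|\alpha_i|,|\nabla g|\le\varepsilon_0$ then yield
\[
I_{n,\sigma,\mathscr{R}}[u_\lambda]=I_{n,\sigma,\R^n_+}\bigl[(\eta\circ\Phi)\Theta_\lambda\bigr]+O(\varepsilon_0\lambda^{-1})+O(\lambda^{-2}),
\]
while standard cut-off estimates together with the decay bounds for $\Theta$ from Proposition~\ref{lem:minimizerbound} give $I_{n,\sigma,\R^n_+}[(\eta\circ\Phi)\Theta_\lambda]=S_{n,\sigma}(\R^n_+)+O(\lambda^{-(n-2\sigma)})$ and $\|u_\lambda\|_{L^{2n/(n-2\sigma)}}^2=1+O(\lambda^{-n})$.

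For the Br\'ezis--Nirenberg piece, the inclusion $\Omega\subset\{|x'|<R_0\}\cap\mathscr{R}$ guarantees $\mathscr{R}\cap\{|y'|\ge R_0\}\subset\mathscr{R}\setminus\Omega$; integrating against a bounded slab of positive measure in this region (of geometry controlled by $R_0,\delta_0$ and the smallness) produces a uniform lower bound
\[
\inf_{x\in B_{\delta_0/4}\cap\mathscr{R}}\int_{\mathscr{R}\setminus\Omega}\frac{\ud y}{|x-y|^{n+2\sigma}}\ge\kappa(n,\sigma,\delta_0,R_0)>0,
\]
whence $\;2\int_\Omega u_\lambda^2\int_{\mathscr{R}\setminus\Omega}|x-y|^{-n-2\sigma}\,\ud y\,\ud x\ge c_\ast\lambda^{-2\sigma}\;$ for $\lambda$ large, with $c_\ast=c_\ast(n,\sigma,\delta_0,R_0)>0$. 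Putting everything together,
\[
\frac{I_{n,\sigma,\Omega}[u_\lambda]}{\|u_\lambda\|_{L^{2n/(n-2\sigma)}}^2}\le S_{n,\sigma}(\R^n_+)+C_1\varepsilon_0\lambda^{-1}-c_\ast\lambda^{-2\sigma}+O(\lambda^{-2}),
\]
and because $2\sigma>1$, choosing $\lambda=\lambda(\varepsilon_0)\to\infty$ with $\varepsilon_0\lambda^{2\sigma-1}\le c_\ast/(4C_1)$ --- e.g.\ $\lambda=\varepsilon_0^{-\beta}$ with any fixed $\beta\in(0,1/(2\sigma-1))$ --- forces the right-hand side strictly below $S_{n,\sigma}(\R^n_+)$ as soon as $\varepsilon_0\le\varepsilon_0(n,\sigma,R_0,\delta_0)$.

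The main obstacle is the honest extraction of the factor $\varepsilon_0$ from each piece of the change-of-variables error: the quadratic part $\tfrac{1}{2}\sum_i\alpha_i x_i^2$ of $h$ manifestly gives the $\Gamma_0$-term of Theorem~\ref{thm:generaldomains} with coefficients $|\alpha_i|\le\varepsilon_0$ built in, but the non-quadratic part $g(x')|x'|^2$ has to be absorbed through the Lipschitz bound $|\nabla g|\le\varepsilon_0$ on $\{|x'|<R_0\}$ together with the concentration of $\Theta_\lambda$ at scale $\lambda^{-1}\ll R_0$, so as to rule out an $\varepsilon_0$-independent $O(\lambda^{-1})$ error that would defeat the $\lambda^{-2\sigma}$ Br\'ezis--Nirenberg gain. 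A secondary technical point is that the lower bound $\kappa$ must be arranged to be uniform in the admissible class of $g$, which forces one to localize the auxiliary slab in the Br\'ezis--Nirenberg estimate to a region where the geometry of $\partial\mathscr{R}$ is quantitatively controlled by the smallness.
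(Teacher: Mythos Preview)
Your approach is essentially the same as the paper's: both build a test function from a concentrated, cut-off minimizer $\Theta$ pulled back through the boundary-straightening map, Taylor-expand the transformed kernel, and exploit a Br\'ezis--Nirenberg negative contribution of order $\lambda^{-2\sigma}$ coming from the strict inclusion $\Omega\subsetneq\mathscr{R}$ to beat the curvature corrections, which are made small via $|\alpha_i|,|\nabla g|\le\varepsilon_0$. The only organizational difference is that you perform the BN split $I_{n,\sigma,\Omega}=I_{n,\sigma,\mathscr{R}}-2\int u_\lambda^2\int_{\mathscr{R}\setminus\Omega}\cdots$ \emph{before} changing variables, whereas the paper changes variables first and then invokes the ready-made estimate $\iint_{U\times U}\le S_{n,\sigma}(\R^n_+)-c\lambda^{-2\sigma}+C\lambda^{-n-2\sigma+2}$ from \cite{FJX}; these are two sides of the same computation.

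One point deserves correction. Your claimed remainder $O(\lambda^{-2})$ for the higher-order pieces of the kernel expansion is too optimistic: after rescaling, the terms beyond the $\Gamma_0$ contribution carry a weight $(|\xi'|^2+|\zeta'|^2)$ against $|\Theta(\xi)-\Theta(\zeta)|^2/|\xi-\zeta|^{n+2\sigma}$, and this weighted double integral over $B_{C\lambda}^+\times B_{C\lambda}^+$ is \emph{not} bounded --- it grows like $\lambda^{2-2\sigma}$ (this is the content of Proposition~\ref{prop:integrability} with $\gamma=2$). Consequently the remainder is $O(\varepsilon_0\lambda^{-2\sigma})$ rather than $O(\lambda^{-2})$. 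This does not damage your conclusion, since the extra factor $\varepsilon_0$ still lets the BN term $-c_\ast\lambda^{-2\sigma}$ dominate for $\varepsilon_0$ small; indeed the paper's final inequality reads
\[
\le S_{n,\sigma}(\R^n_+)-\frac{(n+2\sigma)H\Gamma_0}{2\lambda}-(c-C\varepsilon_0)\lambda^{-2\sigma}+C\lambda^{-n+1},
\]
and one then fixes $\lambda$ large and takes $\varepsilon_0$ small (your coupled choice $\lambda=\varepsilon_0^{-\beta}$ works equally well). The technical obstacle you flag at the end --- extracting the factor $\varepsilon_0$ honestly from the $g(x')|x'|^2$ piece --- is exactly what the paper handles via the pointwise bounds \eqref{eq:estC}--\eqref{eq:estD} combined with Proposition~\ref{prop:integrability}.
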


An important intermediate step in the proof of Theorem \ref{thm:generaldomains} is that the minimizers of $ S_{n,\sigma}(\mathbb{R}_+^n) $ are radially symmetric in the first $ n-1 $ variables.
\begin{thm}\label{thm:minimizersymmetry}
Assume that $ n\geq 2 $, $ 1/2<\sigma<1 $ and $ u\in \mathring{H}^{\sigma}(\mathbb{R}_+^n) $ is a minimizer of $S_{n,\sigma}(\R^n_+)$. Then $u$ must be radially symmetric about some point in $ \mathbb{R}^{n-1} $ for the first $ n-1 $ variables.
\end{thm}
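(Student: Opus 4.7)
The strategy is two-point rearrangement (polarization) with respect to hyperplanes containing the $e_n$-direction, since these are the only hyperplanes whose associated reflection preserves $\R^n_+$, and for them the regional semi-norm $I_{n,\sigma,\R^n_+}$ behaves under polarization just as the full semi-norm on $\R^n$ does. Varying the hyperplane over all such choices will produce rotational symmetry in the first $n-1$ variables.

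I would first replace $u$ by $\abs{u}$, which is still a minimizer (by $\abs{\abs{u(x)}-\abs{u(y)}}\le\abs{u(x)-u(y)}$), to assume $u\ge 0$; the Euler--Lagrange equation $(-\Delta)_{\R^n_+}^{\sigma}u=c\,u^{(n+2\sigma)/(n-2\sigma)}$ together with a strong maximum principle for the regional fractional Laplacian then gives $u>0$ and continuity of $u$ on $\R^n_+$. For a unit vector $e\in\R^{n-1}\times\{0\}$ and $t\in\R$, set $T=\{x\cdot e=t\}$, $H=\{x\cdot e>t\}$, and let $R$ denote the reflection across $T$; since $e\perp e_n$, we have $R(\R^n_+)=\R^n_+$. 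Define the polarization $u^H:=\max(u,u\circ R)$ on $H$ and $u^H:=\min(u,u\circ R)$ on $\R^n_+\setminus H$. Splitting $\iint_{\R^n_+\times\R^n_+}$ into the four pieces $H\times H$, $H\times H^c$, $H^c\times H$, $H^c\times H^c$ and pulling the last three back to $H\times H$ via $R$, the elementary inequality
\[
(a-b)^2+(c-d)^2\ \ge\ (\max(a,c)-\max(b,d))^2+(\min(a,c)-\min(b,d))^2
\]
combined with the identity $\abs{Rx-y}^2-\abs{x-y}^2=4(x\cdot e-t)(y\cdot e-t)>0$ for $x,y\in H$, yield $\|u^H\|_{L^{2n/(n-2\sigma)}(\R^n_+)}=\|u\|_{L^{2n/(n-2\sigma)}(\R^n_+)}$ and $I_{n,\sigma,\R^n_+}[u^H]\le I_{n,\sigma,\R^n_+}[u]$. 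Hence $u^H$ is a minimizer as well, and equality must hold above.

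The pointwise integrand inequality is strict exactly when $u(x)\ne u(Rx)$ and $u(y)\ne u(Ry)$ with opposite signs and $x,y\notin T$; combined with the continuity and positivity of $u$, this forces, for every $(e,t)$, either $u\ge u\circ R$ on $H$ or $u\le u\circ R$ on $H$. The sets of $t$ for each alternative are closed with union $\R$; neither set can be all of $\R$ (otherwise $u$ would be monotone in the $e$-direction, incompatible with $u\in L^{2n/(n-2\sigma)}(\R^n_+)$), so by connectedness of $\R$ they overlap at a $t_e^*$ at which $u$ is symmetric about $T_{e,t_e^*}$; uniqueness of $t_e^*$ follows since two such values would make $u$ periodic in the $e$-direction. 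A standard argument then shows that as $e$ ranges over unit vectors in $\R^{n-1}\times\{0\}$, these symmetry hyperplanes share a common line $\{x_0'\}\times\R$ (any failure would generate a nontrivial translation in the symmetry group of $u$), and $u$ is therefore radially symmetric in $x'$ about $x_0'$. The main obstacle is the rigidity step: passing from the integrated equality to the pointwise sign dichotomy relies essentially on the strict positivity and continuity of $u$, together with the strict distance inequality $\abs{Rx-y}>\abs{x-y}$ off $T$; both positivity and regularity in turn rest on the theory of the regional fractional Laplacian. An alternative, essentially equivalent route would be moving planes applied directly to the Euler--Lagrange equation with the same family of hyperplanes, trading the rigidity analysis for decay estimates at infinity.
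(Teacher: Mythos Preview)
Your polarization argument is correct and is a genuinely different route from the paper's primary proof. The paper proceeds by fiberwise symmetric decreasing rearrangement in the $x'$-variables: writing $I_{n,\sigma,\R^n_+}[u]=\iint_{\R_+\times\R_+}J_{|x_n-y_n|}[u(\cdot,x_n),u(\cdot,y_n)]\,\ud x_n\,\ud y_n$ with a kernel that is integrable in $x'$ for each $r>0$, expanding the square, and applying the Riesz rearrangement inequality to the cross term; the equality case then comes directly from Lieb's characterization of equality in Riesz. This avoids any appeal to the Euler--Lagrange equation, regularity, or positivity of $u$---the conclusion is obtained purely at the variational level for arbitrary $\mathring H^\sigma$ functions. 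Your approach trades the (nontrivial) Riesz/Lieb machinery for the more elementary two-point rearrangement inequality, but the rigidity step (passing from equality to the dichotomy $u\ge u\circ R$ or $u\le u\circ R$ on $H$) and the subsequent connectedness argument in $t$ genuinely require continuity and strict positivity of $u$, which you import from the PDE side. So each approach buys something: the paper's is self-contained at the level of function spaces but uses a sharper rearrangement tool; yours uses only the simplest rearrangement but leans on regularity theory for the regional fractional Laplacian. The paper in fact also records a second proof via moving planes applied to the Euler--Lagrange equation (your suggested alternative), so your proposal sits naturally between the two proofs given there. One small point you glossed over: after replacing $u$ by $|u|$ you should note that equality $I[u]=I[|u|]$ forces $u=\pm|u|$, so the radial symmetry of $|u|$ transfers back to $u$; this is immediate but worth stating.
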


In fact, this symmetry holds not only for the minimizers of $S_{n,\sigma}(\R^n_+)$, but also for the solutions of its Euler-Lagrange equation.

\begin{thm}\label{thm:symmetry}
Assume that $ n\geq 2 $, $ 1/2<\sigma<1 $ and $ u\in \mathring{H}^{\sigma}(\mathbb{R}_+^n) $ is a nonnegative solution of 
\begin{align}\label{eq:global}
(-\Delta)_{\mathbb{R}_+^n}^{\sigma}u=u^{\frac{n+2\sigma}{n-2\sigma}} \quad\mbox{in }\mathbb{R}_+^n,
\end{align}
then $u$ must be radially symmetric about some point in $ \mathbb{R}^{n-1} $ for the first $ n-1 $ variables.
\end{thm}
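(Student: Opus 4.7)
The plan is to prove Theorem \ref{thm:symmetry} by the method of moving planes in every horizontal direction. The key structural observation is that for any $\lambda\in\mathbb{R}$ and any unit vector $e$ lying in the hyperplane $\{x_n=0\}$, the reflection $R^e_\lambda\colon x\mapsto x-2(x\cdot e-\lambda)e$ is an isometry of $\mathbb{R}^n_+$ onto itself; hence $(-\Delta)^{\sigma}_{\mathbb{R}^n_+}$ commutes with $R^e_\lambda$, and if $u$ solves \eqref{eq:global} then so does $u\circ R^e_\lambda$. By rotational invariance in $x'$ it suffices to treat the single direction $e=e_1$. Before starting the scheme I would upgrade $u$ to $L^\infty(\mathbb{R}^n_+)$ by a Brezis--Kato/Moser iteration based on the embedding $\mathring{H}^\sigma(\mathbb{R}^n_+)\hookrightarrow L^{2n/(n-2\sigma)}$, then extract the decay $u(x)\to 0$ as $|x|\to\infty$ from the equation together with the global $L^{2n/(n-2\sigma)}$ bound.

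With $\Sigma_\lambda:=\{x\in\mathbb{R}^n_+:x_1>\lambda\}$, $x^\lambda:=(2\lambda-x_1,x_2,\ldots,x_n)$, $u_\lambda(x):=u(x^\lambda)$, and $w_\lambda:=u-u_\lambda$, the antisymmetry $w_\lambda\circ R^{e_1}_\lambda=-w_\lambda$ together with the change of variables $y\mapsto y^\lambda$ on $\mathbb{R}^n_+\setminus\Sigma_\lambda$ yields, for every $x\in\Sigma_\lambda$,
\[
(-\Delta)^{\sigma}_{\mathbb{R}^n_+} w_\lambda(x)
= 2\,\mathrm{PV}\!\int_{\Sigma_\lambda}\!K_\lambda(x,y)\,(w_\lambda(x)-w_\lambda(y))\,\mathrm{d}y\,+\,4\,w_\lambda(x)\!\int_{\Sigma_\lambda}\!\tfrac{\mathrm{d}y}{|x-y^\lambda|^{n+2\sigma}},
\]
with $K_\lambda(x,y):=|x-y|^{-n-2\sigma}-|x-y^\lambda|^{-n-2\sigma}\ge 0$ for $x,y\in\Sigma_\lambda$. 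The equation together with the mean value theorem then reads $(-\Delta)^{\sigma}_{\mathbb{R}^n_+}w_\lambda = c_\lambda(x)\,w_\lambda$ with $c_\lambda(x)\le C\,(u(x)^{4\sigma/(n-2\sigma)}+u_\lambda(x)^{4\sigma/(n-2\sigma)})$; this representation is the engine of the argument.

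The rest follows a standard direct moving-plane scheme. First, I would establish an antisymmetric maximum principle on $\mathbb{R}^n_+$: if $w$ is bounded, antisymmetric across $T_\lambda:=\{x_1=\lambda\}$, tends to $0$ at infinity, and satisfies $(-\Delta)^{\sigma}_{\mathbb{R}^n_+}w+cw\ge 0$ in $\Sigma_\lambda$ with $\|c_-\|_{L^{n/(2\sigma)}(\Sigma_\lambda)}$ sufficiently small, then $w\ge 0$ in $\Sigma_\lambda$. Its proof evaluates the representation above at a negative infimum of $w$, retains only the manifestly nonnegative contributions, and closes via H\"older and Sobolev. Applying this to $w_\lambda$ for $\lambda$ so large that $\|u\|_{L^{2n/(n-2\sigma)}(\Sigma_\lambda)}$ is small starts the plane. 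Setting $\lambda_0:=\inf\{\lambda:w_\mu\ge 0 \text{ in }\Sigma_\mu\ \forall\mu\ge\lambda\}$, either $\lambda_0=-\infty$, in which case monotonicity of $u$ in $x_1$ together with $u\in L^{2n/(n-2\sigma)}$ forces $u\equiv 0$, or $\lambda_0\in\mathbb{R}$ and by continuity $w_{\lambda_0}\ge 0$ on $\Sigma_{\lambda_0}$. In the latter case, if $w_{\lambda_0}\not\equiv 0$, the strong maximum principle gives $w_{\lambda_0}>0$ in $\Sigma_{\lambda_0}$; reapplying the antisymmetric principle on a thin slab adjacent to $T_{\lambda_0}$, where $\|c_{\lambda_0-\varepsilon}\|_{L^{n/(2\sigma)}}$ remains small by absolute continuity of the integral and decay at infinity, produces some $\varepsilon>0$ with $w_{\lambda_0-\varepsilon}\ge 0$ on $\Sigma_{\lambda_0-\varepsilon}$, contradicting the minimality of $\lambda_0$. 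Hence $w_{\lambda_0}\equiv 0$ and $u$ is symmetric across $T_{\lambda_0}$; running this in every horizontal direction yields radial symmetry in $x'$.

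The main obstacle is the antisymmetric maximum principle for the regional operator: since its kernel integrates only over $\mathbb{R}^n_+$ rather than over $\mathbb{R}^n$, results formulated for the full-space fractional Laplacian do not apply verbatim. However, because $R^{e_1}_\lambda$ preserves $\mathbb{R}^n_+$, the positive-kernel representation displayed above survives intact with $\Sigma_\lambda\subset\mathbb{R}^n_+$ throughout, and Chen--Li--Li-type full-space arguments transport to the regional setting. A secondary issue is boundary decay of $u$ near $\partial\mathbb{R}^n_+$ used to guarantee that the relevant infima of $w_\lambda$ are attained, but the $L^\infty$ bound together with global $L^{2n/(n-2\sigma)}$-integrability is sufficient for the moving-plane scheme.
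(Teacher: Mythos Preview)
Your overall scheme---direct moving planes in the $x_1$-direction, exploiting that horizontal reflections preserve $\R^n_+$, and the antisymmetric kernel decomposition you display---is exactly the paper's. The divergence is in the a~priori information you feed into it and in how you formulate the maximum principle, and here there is a genuine gap.

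The paper does not work from a Brezis--Kato $L^\infty$ bound plus qualitative decay. It first proves (Proposition~\ref{lem:minimizerbound}) the sharp two-sided estimate $u(x)\asymp x_n^{2\sigma-1}(1+|x|)^{-(n+2\sigma-2)}$ together with $C^1$-regularity of $x_n^{1-2\sigma}u$ up to $\partial\R^n_+$. These \emph{pointwise} rates are what drive the argument: at a negative minimum $x^0$ of $w_\lambda$ one derives
\[
\frac{n+2\sigma}{n-2\sigma}\,u(x^0)^{\frac{4\sigma}{n-2\sigma}}\ \ge\ 4\int_{\Sigma_\lambda}\frac{\ud y}{|x^0-y^\lambda|^{n+2\sigma}}\,,
\]
and both the start of the plane (using $|x|^{2\sigma}|c(x)|\to 0$ while the right side is $\ge m|x^0|^{-2\sigma}$) and the continuation step (using $u(\bar x)^{4\sigma/(n-2\sigma)}\le C\,\bar x_n^{4\sigma(2\sigma-1)/(n-2\sigma)}\to 0$ when the minimum drifts toward $\partial\R^n_+$) rely on those rates. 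An $L^\infty$ bound and mere $u(x)\to 0$ at infinity give neither: they do not control $|x|^{2\sigma}c(x)$, and without $u\lesssim x_n^{2\sigma-1}$ nothing prevents the minimum from escaping to the boundary in the narrow-region step.

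Your antisymmetric maximum principle also mixes two incompatible mechanisms. A smallness hypothesis on $\|c_-\|_{L^{n/(2\sigma)}(\Sigma_\lambda)}$ is the input for a \emph{variational} narrow-region principle (test with $(w_\lambda)_-$ and close by Sobolev); evaluation ``at a negative infimum'' is the \emph{pointwise} Chen--Li--Li route, which yields an inequality involving only $c(x^0)$ and never meets H\"older/Sobolev. The pointwise route additionally needs the infimum to be attained in the open half-space, hence continuity of $w_\lambda$ up to $\partial\R^n_+$ with $w_\lambda=0$ there---precisely the boundary information you dismiss as ``secondary'' and do not supply. Either commit fully to a variational narrow-region argument and drop the pointwise language, or, as the paper does, first establish Proposition~\ref{lem:minimizerbound} and run the pointwise scheme.
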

To prove Theorem \ref{thm:symmetry}, we use the method of moving planes for the regional fractional Laplacian. In this step, we adapt ideas in \cite{CLL} for the full fractional Laplacian $ (-\Delta)^{\sigma} $ to the regional fractional Laplacian $ (-\Delta)_{\mathbb{R}_+^n}^{\sigma} $ in our case. Although Theorem \ref{thm:minimizersymmetry} follows from  Theorem \ref{thm:symmetry}, we also provide a proof using the rearrangement arguments, which are of independent interests.

This paper is organized as follows. In Section \ref{sec2}, we prove the radial symmetry in Theorem \ref{thm:minimizersymmetry} and Theorem \ref{thm:symmetry}. In Section \ref{sec3}, we show the properties of the sharp constant $S_{n,\sigma}(\Omega)$ stated in Theorems \ref{thm:generaldomains} and \ref{thm:generaldomains2}. In the Appendix \ref{sec:appendix}, we include the technical calculations for some quantitative integrals of the minimizers $\Theta$ of $S_{n,\sigma}(\R^n_+)$.

\section{Radial symmetry}\label{sec2}

Let $n\ge 2$. If $u$ is a function on $\R^n_+$ and such that for a.e. $x_n\in\R_+$ and every $\lambda>0$ one has $|\{x'\in\R^{n-1}:  |u(x',x_n)|>\lambda \}|<\infty$, where $|\cdot|$ denotes the Lebesgue measure, then we define its rearrangement
$$
u^\sharp(x',x_n):= u(\cdot,x_n)^*(x') \,.
$$
Here $*$ denotes symmetric decreasing rearrangement in $\R^{n-1}$.

\begin{prop}\label{prop:rearrangement}
	Let $n\geq 2$ and $\sigma\in(0,1)$. Then for any $u\in\mathring H^\sigma(\R^n_+)$, one has
	$$
	I_{n,\sigma,\R^n_+}[u] \geq I_{n,\sigma,\R^n_+}[u^\sharp] \,.
	$$
	If the equality holds, then there is an $a'\in\R^{n-1}$ such that either
	$$
	u(x',x_n) = u^\sharp(x'-a',x_n)
	\qquad\text{for a.e.}\ (x',x_n)\in\R^n_+
	$$
	or
	$$
	u(x',x_n) = - u^\sharp(x'-a',x_n)
	\qquad\text{for a.e.}\ (x',x_n)\in\R^n_+ \,.
	$$
\end{prop}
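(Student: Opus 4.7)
The plan is to slice in the vertical variables $x_n,y_n$ and apply Riesz's rearrangement inequality transversally in $\R^{n-1}$. Writing
\[
I_{n,\sigma,\R^n_+}[u] = \int_0^\infty\!\!\int_0^\infty\!\!\iint_{\R^{n-1}\times\R^{n-1}} (u(x',s)-u(y',t))^2\,K_{s-t}(x'-y')\,\ud x'\,\ud y'\,\ud s\,\ud t
\]
with transverse kernel $K_r(z'):=(|z'|^2+r^2)^{-(n+2\sigma)/2}$, I first note that $K_r\in L^1(\R^{n-1})$ with $\|K_r\|_{L^1(\R^{n-1})}=c_{n,\sigma}|r|^{-1-2\sigma}$, and is strictly symmetric-decreasing, for each $r\ne 0$.

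For the inequality $I_{n,\sigma,\R^n_+}[u]\ge I_{n,\sigma,\R^n_+}[u^\sharp]$, I would first reduce to $u\ge 0$ via the pointwise bound $(a-b)^2\ge(|a|-|b|)^2$ (an equality only when $u(x)u(y)\ge 0$ a.e., i.e., $u$ has constant sign). For nonnegative $u$, the layer-cake identity
\[
(u(x)-u(y))^2 = \int_0^\infty\!\!\int_0^\infty (\chi_{E_r}(x)-\chi_{E_r}(y))(\chi_{E_\rho}(x)-\chi_{E_\rho}(y))\,\ud r\,\ud\rho,\qquad E_t:=\{u>t\},
\]
(whose integrand is pointwise $\ge 0$ since the superlevel sets are nested) reduces matters, at each fixed $r,\rho,s,t$, to a bilinear functional in the four slice sets $A=\{x':u(x',s)>r\}$, $A'=\{y':u(y',t)>r\}$, $B=\{x':u(x',s)>\rho\}$, $B'=\{y':u(y',t)>\rho\}$. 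Expanding the product, the two diagonal terms simplify to $|A\cap B|\,\|K_{s-t}\|_{L^1}$ and $|A'\cap B'|\,\|K_{s-t}\|_{L^1}$, which are invariant under slicewise rearrangement (for $r\le\rho$ one has $B\subset A$, hence $|A^\sharp\cap B^\sharp|=|B|=|A\cap B|$), while the two cross terms $\iint\chi_A(x')\chi_{B'}(y')K_{s-t}(x'-y')\,\ud x'\,\ud y'$ and its symmetric counterpart \emph{increase} under slicewise rearrangement by Riesz's rearrangement inequality. Integrating in $(r,\rho,s,t)$ then yields the desired inequality; all integrals are finite because $u\in L^{2n/(n-2\sigma)}(\R^n_+)$ by Sobolev embedding, so every slice of every superlevel set has finite $(n-1)$-dimensional Lebesgue measure for a.e.\ height.

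For the equality case, equality in the sign reduction forces $u$ to have constant sign (WLOG $u\ge 0$), and equality in each cross-term Riesz application, by Burchard's characterization of equality in Riesz's inequality with a strictly symmetric-decreasing kernel, produces, for a.e.\ quadruple $(r,\rho,s,t)$, a common translation $a'=a'(r,\rho,s,t)\in\R^{n-1}$ so that the four slice sets coincide with translates of their rearrangements by $a'$. Choosing a reference slice $s_0$ and level $r_0$ at which $\{x':u(x',s_0)>r_0\}$ is nondegenerate and not already radial about any point (a generic condition, failing only when $u$ is already of the claimed form) pins down $a'$ uniquely, and a consistency argument across the remaining parameters gives a single global $a'\in\R^{n-1}$. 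Combined with the sign choice, $u=\pm u^\sharp(\cdot-a',\cdot)$. The main obstacle is exactly this equality analysis: applying Burchard's theorem to indicator functions requires a non-degeneracy check on the level sets of $u$, and converting the a priori slice- and level-dependent translations $a'(r,\rho,s,t)$ into a single global vector via a consistency/measurable-selection argument is the technical core of the statement, whereas the inequality itself, once the problem is sliced transversally, is an immediate application of Riesz.
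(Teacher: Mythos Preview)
Your approach is correct and shares the paper's core idea---slice in the vertical variables and apply Riesz's rearrangement inequality transversally---but the execution differs in two places.

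For the inequality, the paper is more direct: at fixed $x_n\neq y_n$ it simply expands $(f(x')-g(y'))^2=f(x')^2+g(y')^2-2f(x')g(y')$, notes that the diagonal terms equal $\|f\|_{L^2(\R^{n-1})}^2$ and $\|g\|_{L^2(\R^{n-1})}^2$ times $\|K_{|x_n-y_n|}\|_{L^1}$ (hence rearrangement-invariant), and applies Riesz once to the single cross term $\iint f(x')g(y')K_{|x_n-y_n|}(x'-y')\,\ud x'\,\ud y'$. Your layer-cake decomposition in the levels $r,\rho$ is also valid, and it has the advantage that finiteness of the slice level-sets (from $u\in L^{2n/(n-2\sigma)}$) is all you need, whereas the paper must justify $u(\cdot,x_n)\in L^2(\R^{n-1})$ via an extra truncation step with $\min\{(|u|-\epsilon)_+,M\}$. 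So you trade one technicality for another.

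For the equality case, the paper's route is considerably cleaner. Instead of invoking Burchard's theorem on indicator functions at each quadruple $(r,\rho,s,t)$ and then running a four-parameter consistency argument, the paper applies Lieb's characterization of equality in Riesz directly at the \emph{function} level: for a.e.\ pair $(x_n,y_n)$ there is a single $a'(x_n,y_n)$ with $|u(x',x_n)|=u^\sharp(x'-a',x_n)$ and $|u(y',y_n)|=u^\sharp(y'-a',y_n)$. Since the first identity does not involve $y_n$ and the second does not involve $x_n$, $a'$ is immediately constant. This bypasses the level-set non-degeneracy checks and the measurable-selection/consistency step that you correctly flag as the ``technical core'' of your argument; Lieb's theorem has already absorbed that work.
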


\begin{proof}[Proof of Theorem \ref{thm:minimizersymmetry}] 
By the equimeasurability property of symmetric decreasing rearrangment in $\R^{n-1}$ we have $\int_{\R^{n-1}} (u^\sharp(x',x_n))^p \,dx' = \int_{\R^{n-1}} |u(x',x_n)|^p \,dx'$ for any $p>0$. Thus, as a consequence of Proposition \ref{prop:rearrangement}, we infer that any minimizer $u$ of $S_{n,\sigma}(\R^n_+)$ satisfies either $u(x',x_n) = u^*(x'-a',x_n)$ for a.e. $(x',x_n)\in\R^n_+$ or $u(x',x_n) = -u^*(x'-a',x_n)$ for a.e. $(x',x_n)\in\R^n_+$, for some $a'\in\R^{n-1}$.
\end{proof}

For the proof of the inequality in Proposition \ref{prop:rearrangement}, we use an argument due to Almgren-Lieb \cite{AL}. To characterize the cases of equality, we use a strengthening of this argument due to Frank-Seiringer \cite{FS}.

\begin{proof}[Proof of Proposition \ref{prop:rearrangement}] 
	We write
	$$
	I_{n,\sigma,\R^n_+}[u] = \iint_{\R_+\times\R_+}  J_{|x_n-y_n|}[u(\cdot,x_n),u(\cdot,y_n)] \,\ud x_n\,\ud y_n
	$$
	with
	$$
	J_{r}[f,g] := \iint_{\R^{n-1}\times\R^{n-1}} \frac{(f(x')-g(y'))^2}{(|x'-y'|^2 +r^2)^\frac{n+2\sigma}{2}}\,\ud x'\,\ud y' \,.
	$$
	Note that when $r>0$, the kernel $(|z'|^2+r^2)^{-\frac{n+2\sigma}{2}}$ is integrable. Therefore, we can expand the square $(f(x')-f(y'))^2$ and, in the ``diagonal terms" perform one of the integrals, which leads to the square of the $L^2$-norms of $f$ and $g$. Since these norms coincide with those of $f^*$ and $g^*$, we obtain
	$$
	J_{r}[f,g] - J_{r}[f^*,g^*] = 2 \iint_{\R^{n-1}\times\R^{n-1}} \frac{f^*(x')\,g^*(y') - f(x')g(y')}{(|x'-y'|^2 +r^2)^\frac{n+2\sigma}{2}}\,\ud x'\,\ud y' \,.
	$$
	By the Riesz rearrangement inequality (see, e.g., Theorem 3.7 in Lieb--Loss \cite{LiebLoss}),
	$$
	J_{r}[f,g] - J_{r}[f^*,g^*] \geq 0 \,.
	$$
	Inserting this with $f=u(\cdot,x_n)$ and $g=u(\cdot,y_n)$ into the above formula we obtain $I_{n,\sigma,\R^n_+}[u] \geq I_{n,\sigma,\R^n_+}[u^\#]$, as claimed.
	
	In the above argument, we use the square integrability of $u(\cdot,x_n)$ for a.e. $x_n$, which is not a priori clear. We can argue more carefully as follows. We first observe that $(u(x)-u(y))^2 \geq (|u(x)|-u(y)|)^2$, so $I_{n,\sigma,\R^n_+}[u]\geq I_{n,\sigma,\R^n_+}[|u|]$. Now we apply the above argument to $\min\{ (|u|-\epsilon)_+,M\}$ with two positive constants $\epsilon$ and $M$, which belongs to $L^\infty$ and has support on a set of finite measure, so is in $L^2$. So for this cut off function we have the claimed inequality and then we can remove the cut-offs by applying the monotone convergence theorem.
	
	Now assume that we have the equality $I_{n,\sigma,\R^n_+}[u] = I_{n,\sigma,\R^n_+}[u^\#]$. Then we also must have the equality $I_{n,\sigma,\R^n_+}[u]= I_{n,\sigma,\R^n_+}[|u|]$ and, by the above argument we easily see that either $u(x)=|u(x)|$ for a.e. $x\in\R^n_+$ or $u(x)=-|u(x)|$ for a.e. $x\in\R^n_+$. Next, the equality $I_{n,\sigma,\R^n_+}[|u|] = I_{n,\sigma,\R^n_+}[u^\#]$ implies that for a.e.\ $(x_n,y_n)\in\R_+\times\R_+$,
	$$
	J_{|x_n-y_n|}[|u(\cdot,x_n)|,|u(\cdot,y_n)|] = J_{|x_n-y_n|}[u^\sharp(\cdot,x_n),u^\sharp(\cdot,y_n)] \,.
	$$
	Thus, by Lieb's theorem (see, e.g., Theorem 3.9 in Lieb--Loss \cite{LiebLoss}) for a.e.\ $(x_n,y_n)\in\R_+\times\R_+$ there is an $a'(x_n,y_n)\in\R^{n-1}$ such that
	$$
	|u(x',x_n)| = u^\sharp(x'-a'(x_n,y_n),x_n)
	\qquad\text{and}\qquad
	|u(y',y_n)| = u^\sharp(y'-a'(x_n,y_n),y_n)
	$$
	for a.e.\ $x',y'\in\R^{n-1}$. Since the left hand side in the first equation is independent of $y_n$ and in the second one of $x_n$, we deduce that $a'(x_n,y_n)$ is independent of $x_n$ and $y_n$, that is, it is a constant $a'\in\R^{n-1}$. This implies the assertion of the proposition.
\end{proof}

Next, we will prove Theorem \ref{thm:symmetry} using the method of moving planes. 

\begin{prop}\label{lem:minimizerbound}
Let $ n\geq 2 $ and $ 1/2<\sigma<1 $. Let $ 0\not\equiv u\in \mathring{H}^{\sigma}(\mathbb{R}_+^n) $ be non-negative and satisfy \eqref{eq:global}. Then $u\in C^{2\sigma-1}_{loc}(\overline\R^n_+)\cap C^\infty(\R^n_+)$, and there are constants $ 0<c\leq C<+\infty $ (depending on $ u $) such that
\begin{align}\label{eq:minimizerbound}
c\frac{x_n^{2\sigma-1}}{(1+|x|)^{n+2\sigma-2}}\leq u(x)\leq C\frac{x_n^{2\sigma-1}}{(1+|x|)^{n+2\sigma-2}}.
\end{align}
Furthermore, $x_n^{1-2\sigma}u(x) \in C^1(\overline\R^n_+)$ and there is a constant $\widetilde C>0$ (depending on $u$) such that
\begin{align}\label{eq:gradientbound}
|\nabla (x_n^{1-2\sigma}u(x))|\le \frac{\widetilde C}{(1+|x|)^{n+2\sigma-1}}.
\end{align}
for $ x\in\mathbb{R}_+^n $.
\end{prop}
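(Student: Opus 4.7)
The plan is to combine four ingredients: interior bootstrap regularity for the semilinear problem, weighted boundary Schauder estimates for the regional fractional Laplacian with characteristic exponent $2\sigma-1$, the Kelvin transform to convert boundary behavior into global decay, and a Hopf-type boundary lemma for the matching lower bound.

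First I would establish interior smoothness. Since $u\ge 0$ lies in $\mathring{H}^{\sigma}(\R^n_+)$ and solves a critical equation, splitting the nonlinearity into a small subcritical piece plus a bounded remainder and running a De\,Giorgi--Moser iteration adapted to $(-\Delta)_{\R^n_+}^{\sigma}$ (which differs from the restriction of $(-\Delta)^\sigma$ only by a smooth zeroth-order coefficient inside $\R^n_+$) yields $u\in L^\infty_{\mathrm{loc}}(\R^n_+)$. An interior Schauder bootstrap then promotes this to $u\in C^\infty(\R^n_+)$.

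The heart of the proof is the boundary behavior. For $\sigma>1/2$ the natural boundary rate of a solution of $(-\Delta)_{\R^n_+}^{\sigma} v=f$ with $f\in L^\infty$ is $x_n^{2\sigma-1}$, and the clean formulation is that $x_n^{1-2\sigma}v\in C^{1,\alpha}(\overline{\R^n_+\cap B_{1/2}})$ for some $\alpha>0$. Applying such a weighted boundary Schauder estimate (in the spirit of the full-fractional-Laplacian theory of Ros-Oton--Serra, adapted to the regional setting) to our equation with right-hand side $u^{(n+2\sigma)/(n-2\sigma)}\in L^\infty_{\mathrm{loc}}(\overline{\R^n_+})$ produces $x_n^{1-2\sigma}u\in C^{1,\alpha}_{\mathrm{loc}}(\overline{\R^n_+})$. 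This immediately gives $u\in C^{2\sigma-1}_{\mathrm{loc}}(\overline{\R^n_+})$ together with the local versions of \eqref{eq:minimizerbound} and \eqref{eq:gradientbound}.

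To globalize and obtain the decay at infinity I would use the Kelvin transform $u^*(x):=|x|^{2\sigma-n}u(x/|x|^2)$. Since $x\mapsto x/|x|^2$ preserves $\R^n_+$ (with $(x/|x|^2)_n=x_n/|x|^2$) and $(n+2\sigma)/(n-2\sigma)$ is the conformal exponent, a direct calculation shows $u^*\in \mathring{H}^{\sigma}(\R^n_+)$ and $u^*$ again satisfies \eqref{eq:global}. Applying the local boundary result to $u^*$ at the origin and tracking the change of variables gives $u(y)\le C\, y_n^{2\sigma-1}|y|^{-(n+2\sigma-2)}$ for large $|y|$ and the analogous bound on $|\nabla(y_n^{1-2\sigma}u)|$, exactly matching the weight $(1+|x|)^{-(n+2\sigma-2)}$ in \eqref{eq:minimizerbound} and the $(1+|x|)^{-(n+2\sigma-1)}$ in \eqref{eq:gradientbound}. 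The matching lower bound is obtained analogously: the strong maximum principle for $(-\Delta)_{\R^n_+}^{\sigma}$ (using $u\not\equiv 0$ and nonnegativity of both $u$ and the right-hand side) gives $u>0$ on $\R^n_+$, and a Hopf-type boundary lemma with $x_n^{2\sigma-1}$ as an explicit positive subsolution barrier yields $u(x)\ge c\,x_n^{2\sigma-1}$ on bounded subsets; the same argument applied to $u^*$ near the origin supplies the matching large-$|x|$ decay.

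The main obstacle is the weighted boundary $C^{1,\alpha}$ regularity invoked in the middle step. The analogous estimate for the ordinary fractional Laplacian is by now standard, but the regional version requires a careful treatment of the missing ``outside tail'' in \eqref{eq:regional} and the identification of $x_n^{2\sigma-1}$ as the correct characteristic half-space solution. Once that estimate is in hand, the rest reduces to clean bookkeeping with the Kelvin transform and standard maximum-principle tools.
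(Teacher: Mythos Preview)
Your proposal is correct and follows essentially the same route as the paper: the paper cites \cite{FJX} for the two-sided pointwise bound \eqref{eq:minimizerbound} (which is obtained there via the interior bootstrap, a Hopf-type barrier at the boundary, and the Kelvin transform you describe), cites Fall--Ros-Oton \cite{FRO} for the weighted boundary regularity $x_n^{1-2\sigma}u\in C^1(\overline{\R^n_+})$, and then applies the Kelvin transform $\tilde u(x)=|x|^{2\sigma-n}u(x/|x|^2)$ exactly as you propose to upgrade the local gradient bound to the global decay \eqref{eq:gradientbound}. The ``main obstacle'' you flag --- the weighted $C^{1}$ boundary estimate for the regional operator with characteristic rate $x_n^{2\sigma-1}$ --- is precisely the content of \cite{FRO}, so that gap is already filled in the literature.
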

\begin{proof}
The estimate \eqref{eq:minimizerbound} was proved in Proposition 1.5 in \cite{FJX}. The estimate \eqref{eq:gradientbound} for $|x|\le 1$ follows from \eqref{eq:minimizerbound} and the regularity that $x_n^{1-2\sigma}u(x)\in C^1(\overline \R^n_+)$ proved in Fall-Ros-Oton \cite{FRO}. Let $$\tilde u(x)=|x|^{2\sigma-n}u\left(\frac{x}{|x|^2}\right).$$ Then $\tilde u$ satisfies \eqref{eq:global} as well. Thus, $\tilde u$ satisfies \eqref{eq:minimizerbound} and $|\nabla (x_n^{1-2\sigma}\tilde u(x))|\le C$ in $\overline B_1^+$ for some $C>0$ (depending on $u$). 
Since $$u(x)=|x|^{2\sigma-n}\tilde u\left(\frac{x}{|x|^2}\right)$$ as well, we have 
$$x_n^{1-2\sigma}u(x)=|x|^{2-2\sigma-n}y_n^{1-2\sigma}\tilde u\left(y\right),\quad\mbox{where}\quad y=\frac{x}{|x|^2}.$$
The estimate \eqref{eq:gradientbound} for $|x|\ge 1$ follows from that $|y_n^{1-2\sigma}\tilde u(y)|+|\nabla (y_n^{1-2\sigma}\tilde u(y))|\le C$ in $\overline B_1^+$.
\end{proof}

\begin{proof}[Proof of Theorem \ref{thm:symmetry}]
For $ \lambda\in\mathbb{R} $ we define
\begin{align}
T_{\lambda}=\left\{x\in\mathbb{R}_+^n:x_1=\lambda\right\},\quad& x^{\lambda}=(2\lambda-x_1,x_2,\cdots,x_n)\nonumber\\
u_{\lambda}(x)=u(x^{\lambda}),\quad& w_{\lambda}(x)=u_{\lambda}(x)-u(x)\nonumber\nonumber
\end{align}
and
\begin{align}
\Sigma_{\lambda}=\left\{x\in\mathbb{R}_+^n:x_1<\lambda\right\},\quad\widetilde{\Sigma}_{\lambda}=\left\{x^{\lambda}:x\in\Sigma_{\lambda}\right\}\nonumber\nonumber.
\end{align}
By Proposition \ref{lem:minimizerbound}, we have $ \lim\limits_{|x|\rightarrow\infty,\ x\in\mathbb{R}_+^n}\omega_{\lambda}(x)=0 $ for any fixed $ \lambda $.  Hence, if $ \omega_{\lambda} $ is negative somewhere in $ \Sigma_{\lambda} $, then the minimum of $ \omega_{\lambda} $ in $\overline\Sigma_{\lambda} $ would be attained in $ \Sigma_{\lambda} $. Let
\begin{align}
\Sigma_{\lambda}^-=\left\{x\in\Sigma_{\lambda}:\omega_{\lambda}(x)<0\right\}\nonumber\nonumber.
\end{align}
Then for $x\in \Sigma_{\lambda}^{-} $,  we have
\begin{align*}
(-\Delta)_{\R^n_+}^{\sigma}w_{\lambda}(x)&=u_{\lambda}(x)^{\frac{n+2\sigma}{n-2\sigma}}-u(x)^{\frac{n+2\sigma}{n-2\sigma}}\nonumber\\
&=\frac{n+2\sigma}{n-2\sigma}\left(\int_0^1 \left(tu_{\lambda}(x)+(1-t)u(x)\right)^{\frac{4\sigma}{n-2\sigma}}\,\ud t\right) \, w_\lambda(x)\\
&\geq\frac{n+2\sigma}{n-2\sigma}u(x)^{\frac{4\sigma}{n-2\sigma}} w_\lambda(x).
\end{align*}
That is
\begin{equation}\label{eq:movingsymmetry}
(-\Delta)^{\sigma}_{\R^n_+}w_{\lambda}(x)+c(x)\omega_{\lambda}(x)\geq 0\quad\mbox{in } \Sigma_{\lambda}^{-} ,
\end{equation}
where $$ c(x):=-\frac{n+2\sigma}{n-2\sigma}u(x)^{\frac{4\sigma}{n-2\sigma}}.$$ Also, by Proposition \ref{lem:minimizerbound}, we have 
\begin{align}
|x|^{2\sigma}|c(x)|\le\frac{C}{|x|^{\frac{4\sigma^2+2n\sigma-4\sigma}{n-2\sigma}}}\nonumber,
\end{align} 
and thus,   $$ \liminf_{|x|\rightarrow\infty,\ x\in\mathbb{R}_+^n}|x|^{2\sigma}c(x)=0. $$ 

Let $x^0\in \Sigma_{\lambda}$ be such that $w(x^0)=\min_{\overline\Sigma_\lambda} w<0$. Then
\begingroup
\allowdisplaybreaks
\begin{align}
(-\Delta)^{\sigma}_{\R^n_+}w_\lambda(x^0)&=2P.V.\int_{\mathbb{R}_+^n}\frac{w_\lambda(x^0)-w_\lambda(y)}{|x^0-y|^{n+2\sigma}}dy\nonumber\\
&=2P.V.\left\{\int_{\Sigma_\lambda}\frac{w_\lambda(x^0)-w_\lambda(y)}{|x^0-y|^{n+2\sigma}}dy+\int_{\widetilde{\Sigma_\lambda}}\frac{w_\lambda(x^0)-w_\lambda(y)}{|x^0-y|^{n+2\sigma}}dy\right\}\nonumber\\
&=2P.V.\left\{\int_{\Sigma_\lambda}\frac{w_\lambda(x^0)-w_\lambda(y)}{|x^0-y|^{n+2\sigma}}dy+\int_{\Sigma_\lambda}\frac{w_\lambda(x^0)-w_\lambda(y^\lambda)}{|x^0-y^\lambda|^{n+2\sigma}}dy\right\}\nonumber\\
&\le 2P.V.\left\{\int_{\Sigma_\lambda}\frac{w_\lambda(x^0)-w_\lambda(y)}{|x^0-y^\lambda|^{n+2\sigma}}dy+\int_{\Sigma_\lambda}\frac{w_\lambda(x^0)+w_\lambda(y)}{|x^0-y^\lambda|^{n+2\sigma}}dy\right\}\nonumber\\
&=4\int_{\Sigma_\lambda}\frac{w_\lambda(x^0)}{|x^0-y^\lambda|^{n+2\sigma}}dy.\label{eq:fractionalcalculation}
\end{align}
\endgroup
Moreover, if $|x_0|>|\lambda|$ is sufficiently large, then 
\begin{align}
\int_{\Sigma_\lambda}\frac{1}{|x^0-y^\lambda|^{n+2\sigma}}dy&\geq\int_{\{y\in \widetilde\Sigma_\lambda: 2|x_0|\le |y-x^0|\le 3|x_0|}\frac{1}{|x^0-y|^{n+2\sigma}}dy\nonumber\\
&\geq\frac{m}{|x^0|^{2\sigma}},\nonumber
\end{align}
where $m>0$ is a constant.  Together with \eqref{eq:movingsymmetry}, we obtain
\begin{align}\label{eq:contradiction1}
0\leq(-\Delta)_{\R^n_+}^{\sigma}u(x^0)+c(x^0)u(x^0)\leq\left[\frac{m}{|x^0|^{2\sigma}}+c(x^0)\right]u(x^0)<0,
\end{align}
which is a contradiction. 

This proves that if $\lambda$ is sufficiently negative, then 
\[
w_\lambda\ge 0\quad\mbox{in }\Sigma_\lambda.
\]
Therefore, we can define
\begin{align}
\bar\lambda=\sup\left\{\lambda\in\mathbb{R}:w_{\mu}(x)\geq 0,\,\,\,\,\,\,\forall x\in \Sigma_{\lambda},\mu\leq\lambda\right\}.\nonumber
\end{align}

If $ \bar\lambda=+\infty $, then since $u(x)\to 0$ as $|x|\to\infty$, we have that $u\equiv 0$, which is a contradiction.  Hence, $\bar\lambda<\infty$. We will prove in the below that $w_{\bar\lambda}\equiv 0$ in $\Sigma_{\bar\lambda}$.

We argue by contradiction that we suppose $w_{\bar\lambda}> 0$ at some point, and thus in some open subset of $\Sigma_{\bar\lambda}$. 

Then $w_{\bar\lambda}> 0$ in  $\Sigma_{\bar\lambda}$, since otherwise, if there exists $z\in \Sigma_{\bar\lambda}$ such that $w_{\bar\lambda}(z)=0$, then by the equation of $w_{\bar\lambda}$, it follows that
\begin{align}
0=(-\Delta)^{\sigma}_{\R^n_+}w_{\bar\lambda}(z)&=2P.V.\int_{\mathbb{R}_+^n}\frac{w_{\bar\lambda}(z)-w_{\bar\lambda}(y)}{|z-y|^{n+2\sigma}}dy\nonumber\\
&=2P.V.\left\{\int_{\Sigma_{\bar\lambda}}\frac{0-w_{\bar\lambda}(y)}{|z-y|^{n+2\sigma}}dy+\int_{\widetilde{\Sigma_{\bar\lambda}}}\frac{0-w_{\bar\lambda}(y)}{|z-y|^{n+2\sigma}}dy\right\}\nonumber\\
&=2P.V.\int_{\Sigma_{\bar\lambda}}w_\lambda(y)\left(\frac{1}{|z-y^{\bar\lambda}|^{n+2\sigma}}-\frac{1}{|z-y|^{n+2\sigma}}\right)dy\nonumber\\
&<0,\nonumber
\end{align}
which is a contradiction.

Now, from \eqref{eq:contradiction1}, we have that there exists $R_0>0$ such that for every $\lambda\in[\bar\lambda,\bar\lambda+1]$,
\[
w_\lambda\ge 0\quad\mbox{in }\Sigma_\lambda\setminus B_{R_0}.
\]
Since we just proved that $w_{\bar\lambda}> 0$ in  $\Sigma_{\bar\lambda}$, by continuity, we have that for every $\va>0$, there exists $\delta>0$ such that
\[
w_\lambda>0\quad\mbox{in } \overline{B_{R_0}\cap \Sigma_{\bar\lambda-\va}\cap\{x_n> \va\} }\mbox{ for all }\lambda\in[\bar\lambda,\bar\lambda+\delta].
\]

We are going to show that 
\begin{equation}\label{eq:movingfurther}
w_\lambda\ge 0\quad\mbox{in } \Sigma_{\lambda}\mbox{ for all }\lambda\in[\bar\lambda,\bar\lambda+\delta]
\end{equation}
if we choose $\va$ and $\delta$ to be small enough. Suppose there exists $\bar x$ satisfying $\bar x_n\in(0,\va )$ or $\bar x_1\in(\bar\lambda-\va,\lambda)$ such that
\[
w_\lambda(\bar x)=\min_{\overline\Sigma_{\lambda}} w_\lambda<0.
\]
Then from \eqref{eq:movingsymmetry} and \eqref{eq:fractionalcalculation}, we have
\[
\frac{n+2\sigma}{n-2\sigma}u(\bar x)^{\frac{4\sigma}{n-2\sigma}} w_\lambda(\bar x)\le (-\Delta)^{\sigma}_{\R^n_+}w_\lambda(\bar x)\le 4\int_{\Sigma_\lambda}\frac{2w_\lambda(\bar x)}{|\bar x-y^\lambda|^{n+2\sigma}}dy.
\]
That is,
\begin{equation}\label{eq:contradiction2}
\int_{\Sigma_\lambda}\frac{1}{|\bar x-y^\lambda|^{n+2\sigma}}dy\le Cu(\bar x)^{\frac{4\sigma}{n-2\sigma}}\le  C \bar x_n^{\frac{4\sigma(2\sigma-1)}{n-2\sigma}}.
\end{equation}

If $\bar x_1\in(\bar\lambda-\va,\lambda)$, then 
\[
\int_{\Sigma_\lambda}\frac{1}{|\bar x-y^\lambda|^{n+2\sigma}}dy\ge \frac{C}{(\va+\delta)^{2\sigma}} \to \infty \quad\mbox{as }\va+\delta\to 0, 
\]
contradicting to \eqref{eq:contradiction2} since $\bar x\in B_{R_0}$.

If $\bar x_n\in(0,\va )$, then since $\bar x\in B_{R_0}$, we have 
\[
\int_{\Sigma_\lambda}\frac{1}{|\bar x-y^\lambda|^{n+2\sigma}}dy\ge C, 
\]
contradicting to \eqref{eq:contradiction2} if $\va$ is small.

This proves \eqref{eq:movingfurther}, which contradicts with the definition of $\bar\lambda$. Hence,  we have proved that $w_{\bar\lambda}\equiv 0$ in $\Sigma_{\bar\lambda}$, that is, $ u $ is symmetric about the plane $ T_{\bar\lambda} $ in $ \mathbb{R}_+^n $. Since the $ x_1 $ direction can be chosen arbitrarily for the first $ n-1 $ variables, we have actually shown that $ u $ is radially symmetric with respect to some point in $\pa\R^n_+$ in the first $ n-1 $ variables.
\end{proof}

\begin{prop}\label{prop:integrability}
Assume that $ n\geq 4$, $ 1/2<\sigma<1 $, $\lambda>0$,  and $\gamma>0$ that $\gamma\neq2\sigma$. Let $\Theta$ be a minimizer of $S_{n,\sigma}(\R^n_+)$ that is radially symmetric in the first $n-1$ variables. Then
\begin{align}
\iint_{B_\lambda^+\times B_\lambda^+}\frac{ (|\xi'|^2+|\zeta'|^2)^{\gamma/2} |\Theta(\xi)-\Theta(\zeta)|^2}{|\xi-\zeta|^{n+2\sigma}}d\xi d\zeta&<C(n,\sigma,\gamma)(1+\lambda^{\gamma-2\sigma}).\label{eq:integrability2}
\end{align}
If $\gamma<2\sigma$, then 
\begin{align}
 \iint_{(\R^n_+\times\R^n_+)\setminus(B_\lambda^+\times B_\lambda^+)}\frac{(|\xi'|^2+|\zeta'|^2)^{\gamma/2}|\Theta(\xi)-\Theta(\zeta)|^2}{|\xi-\zeta|^{n+2\sigma}}\,\ud\xi \ud\zeta &\le C(n,\sigma)\lambda^{\gamma-2\sigma}.\label{eq:integralsignest}
\end{align}

\end{prop}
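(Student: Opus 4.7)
The strategy is to reduce the weighted integrals to computable one-dimensional integrals by exploiting the factorization $\Theta(x) = x_n^{2\sigma-1}v(x)$ provided by Proposition \ref{lem:minimizerbound}, together with the fine pointwise bounds
\[
v(x)\lesssim (1+|x|)^{-(n+2\sigma-2)},\qquad |\nabla v(x)|\lesssim (1+|x|)^{-(n+2\sigma-1)}.
\]
First, I would use $(|\xi'|^2+|\zeta'|^2)^{\gamma/2} \le C_\gamma(|\xi'|^\gamma + |\zeta'|^\gamma)$ and the $(\xi,\zeta)$-symmetry of the remaining integrand to reduce to bounds for integrals carrying only the single weight $|\xi'|^\gamma$. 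Writing
\[
\Theta(\xi) - \Theta(\zeta) = (\xi_n^{2\sigma-1}-\zeta_n^{2\sigma-1})\,v(\zeta) + \xi_n^{2\sigma-1}\,(v(\xi)-v(\zeta))
\]
and applying $(a+b)^2\le 2a^2+2b^2$ splits the squared difference into two nonnegative pieces, each of which will be bounded by a different mechanism.

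For the piece with $(v(\xi)-v(\zeta))^2$, I would split the region of integration according to whether $|\xi-\zeta|$ is small or large compared with $1+|\xi|$. In the near-diagonal regime the $C^1$ bound $|v(\xi)-v(\zeta)|\le|\xi-\zeta|\,\sup_{[\xi,\zeta]}|\nabla v|$ absorbs two powers of $|\xi-\zeta|$ from the kernel; in the far regime, $|v(\xi)-v(\zeta)|\le|v(\xi)|+|v(\zeta)|$ combined with the pointwise decay of $v$ suffices. For the piece with $(\xi_n^{2\sigma-1}-\zeta_n^{2\sigma-1})^2 v(\zeta)^2$, the key model computation is
\[
\int_{\R^n_+}\frac{(\xi_n^{2\sigma-1}-\zeta_n^{2\sigma-1})^2}{|\xi-\zeta|^{n+2\sigma}}\,\ud\xi = C\,\zeta_n^{2\sigma-2},
\]
obtained by integrating $\xi'$ out explicitly and then rescaling $\xi_n = s\zeta_n$. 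The weight $|\xi'|^\gamma$ is then restored via $|\xi'|^\gamma\le C(|\zeta'|^\gamma + |\xi-\zeta|^\gamma)$ together with the analogous computation for the kernel $|\xi-\zeta|^{-(n+2\sigma-\gamma)}$, with care taken to restrict to $B_\lambda^+$ when the global integral would diverge.

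After these reductions, the outer $\xi$- (or $\zeta$-) integration becomes a radial integral of the form $\int R^{n+\gamma-(\text{decay exponent})-1}\,\ud R$ weighted by an integrable factor in the remaining variables. For \eqref{eq:integrability2}, the contribution from $|\xi|\lesssim 1$ produces the $O(1)$ term, while the contribution from $1\le|\xi|\le\lambda$ produces $O(\lambda^{\gamma-2\sigma})$ when $\gamma>2\sigma$ and is absorbed into the constant when $\gamma<2\sigma$. For \eqref{eq:integralsignest}, the exterior region forces at least one of $|\xi|$ or $|\zeta|$ to exceed $\lambda$, and a radial integral $\int_\lambda^\infty R^{\gamma-2\sigma-1}\,\ud R$, convergent because $\gamma<2\sigma$, produces exactly $\lambda^{\gamma-2\sigma}$.

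The main technical obstacle is that the naive H\"older bound $(\xi_n^{2\sigma-1}-\zeta_n^{2\sigma-1})^2 \le |\xi-\zeta|^{4\sigma-2}$ is not integrable against the kernel $|\xi-\zeta|^{-(n+2\sigma)}$ near the diagonal, so one must evaluate the $\xi'$-integral exactly as in the model computation above rather than estimate it crudely. Careful bookkeeping of the exponents in the resulting subcases is required to recover precisely the scaling $\gamma-2\sigma$; the borderline $\gamma = 2\sigma$ produces a logarithmic divergence in the radial integral, which is exactly why it is excluded from the hypothesis.
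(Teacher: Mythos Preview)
Your strategy uses the same ingredients as the paper's proof: the factorization $\Theta=x_n^{2\sigma-1}v$, the splitting of $\Theta(\xi)-\Theta(\zeta)$ into a boundary piece and a $v$-difference piece, and the observation that the inner integral of $(\xi_n^{2\sigma-1}-\zeta_n^{2\sigma-1})^2|\xi-\zeta|^{-n-2\sigma}$ scales like $\zeta_n^{2\sigma-2}$ (the paper obtains this by a three-case estimate restricted to $\{|\xi-\zeta|\le 1\}$ rather than by your cleaner scaling computation, but the output is the same). There is, however, a genuine gap in how you handle the weight on the boundary piece. Because your splitting places $v(\zeta)^2$ rather than $v(\xi)^2$ next to the weight $|\xi'|^\gamma$, the decay of $v$ cannot absorb it: for $\zeta\in B_1^+$ and $|\xi|$ large the piece $(\xi_n^{2\sigma-1}-\zeta_n^{2\sigma-1})^2v(\zeta)^2$ behaves like $\xi_n^{4\sigma-2}$, and whether you transfer the weight via $|\xi'|^\gamma\le C(|\zeta'|^\gamma+|\xi-\zeta|^\gamma)$ or simply restrict to $B_\lambda^+$, the resulting contribution
\[
\int_{\{1\le|\xi|\le\lambda\}}\frac{|\xi'|^\gamma\,\xi_n^{4\sigma-2}}{|\xi|^{n+2\sigma}}\,\ud\xi\ \sim\ \lambda^{\gamma+2\sigma-2}
\]
exceeds the target $\lambda^{\gamma-2\sigma}$ by the factor $\lambda^{4\sigma-2}$. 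The same spurious growth arises in the far-regime $v(\zeta)$ contribution of your second piece $\xi_n^{4\sigma-2}(v(\xi)-v(\zeta))^2$: once the two halves of the factorization are estimated separately, the cancellation that kept $\Theta$ itself bounded is lost.

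The paper's cure is to reverse the order of the two decompositions. It first cuts at $|\xi-\zeta|=1$ and, in the far region $|\xi-\zeta|\ge 1$, bounds $|\Theta(\xi)-\Theta(\zeta)|^2\le 2\Theta(\xi)^2+2\Theta(\zeta)^2$ directly, so that the full decay of $\Theta$ (boundary factor included) absorbs the weight. The factorization $\Theta=x_n^{2\sigma-1}v$ is invoked only in the near region $|\xi-\zeta|\le 1$, where $\xi_n$ and $\zeta_n$ differ by at most $1$ and no uncompensated growth can occur; there the paper also uses the opposite form of your algebraic identity, $\Theta(\xi)-\Theta(\zeta)=v(\xi)(\xi_n^{2\sigma-1}-\zeta_n^{2\sigma-1})+\zeta_n^{2\sigma-1}(v(\xi)-v(\zeta))$, so that the boundary piece carries $v(\xi)^2$, which pairs directly with $|\xi'|^\gamma$ and makes the weight transfer unnecessary. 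Either modification repairs your argument.
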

The proof of this proposition is given in the Appendix \ref{sec:appendix}.

\section{Sharp constants}\label{sec3}

\begin{proof} [Proof of Theorem \ref{thm:generaldomains2}]
First of all, we observe that $ S_{n,\sigma}(\Omega) $ is preserved under reflections, rotations, translations and dilations. Hence, we can assume $\delta_0=4$.

Let $ \Phi:\Omega \rightarrow\mathbb{R}^n $ defined as $$ \xi=\Phi(x)=(x_1,\cdots, x_{n-1}, x_n-h(x')). $$ 
Let $\Theta$ be a minimizer of $S_{n,\sigma}(\R^n_+)$ that is radially symmetric in the first $n-1$ variables. For every $\lambda>0$, we let
\begin{align}
\Theta_{\lambda}(x)=\lambda^{\frac{n-2\sigma}{2}}\Theta({\lambda}x)\nonumber\nonumber.
\end{align}
Let $ \eta $ be a cut off function such that $ \eta\in C^{1}(\mathbb{R}^n) $, $ \eta\equiv 1 $ in $ B_{2} $, $ 0\leq \eta\leq 1 $ in $ B_{3} $ and $ \eta\equiv 0 $ in $ B_{3}^{c} $.  Let $ \theta_{\lambda}(x)=(\eta\Theta_{\lambda})(x) $ and $ v_{\lambda}=\theta_{\lambda}\circ\Phi(x) $. Then we have      
\begin{align}
I_{n,\sigma,\Omega}[v_{\lambda}]&=\iint_{\Omega \times\Omega }\frac{|v_{\lambda}(x)-v_{\lambda}(y)|^2}{|x-y|^{n+2\sigma}}\,\ud x \ud y\nonumber\\
&=\iint_{U\times U}\frac{|\theta_{\lambda}(\xi)-\theta_{\lambda}(\zeta)|^2}{\left(|\xi'-\zeta'|^2+\left[\xi_n+h(\xi')-\zeta_n-h(\zeta')\right]^2\right)^{\frac{n+2\sigma}{2}}}d\xi d\zeta,\label{eq:mainchangeofvariable}
\end{align}
where $U=\Phi( \Omega) $. We would like to  analyze the denominator 
\begin{align}
A(\xi,\zeta)&=\left(|\xi'-\zeta'|^2+\left[\xi_n+h_(\xi')-\zeta_n-h(\zeta')\right]^2\right)^{-\frac{n+2\sigma}{2}}\nonumber\\
&=|\xi-\zeta|^{-(n+2\sigma)}[1+B(\xi,\zeta)+C(\xi,\zeta)+D(\xi,\zeta)]^{-\frac{n+2\sigma}{2}}\label{eq:denominator},
\end{align}
where
\begingroup
\allowdisplaybreaks
\begin{align}
B(\xi,\zeta)&=\frac{1}{|\xi-\zeta|^2}(\xi_n-\zeta_n)\left(\sum_{i=1}^{n-1}\alpha_i \xi_i^2-\sum_{i=1}^{n-1}\alpha_i \zeta_i^2\right),\nonumber\\
C(\xi,\zeta)&=\frac{2(\xi_n-\zeta_n)}{|\xi-\zeta|^2}\left(g\left(\xi'\right)|\xi'|^2-g\left(\zeta'\right)|\zeta'|^2\right),\nonumber\\
D(\xi,\zeta)&=\frac{(h(\xi')-h(\zeta'))^2}{|\xi-\zeta|^2}\nonumber\nonumber.
\end{align}
\endgroup
We will show that each term in the above is sufficiently small so that we can have a Taylor expansion for $A(\xi,\zeta)$.

For $B(\xi,\zeta)$, since
\begin{align}
\left|\sum_{i=1}^{n-1}\alpha_i(\xi_i^2-\zeta_i^2)\right|&\le \left(\sum_{i=1}^{n-1}\alpha_i^2(\xi_i+\zeta_i)^2\right)^{1/2}\left(\sum_{i=1}^{n-1}(\xi_i-\zeta_i)^2\right)^{1/2}\nonumber\\
&\le 2\varepsilon_0 (|\xi'|^2+|\zeta'|^2) ^{1/2}  |\xi'-\zeta'|, \nonumber
\end{align}
where we used $|\alpha_i|\le \varepsilon_0$ for every $i=1,\cdots,n-1$, then we have
\begingroup
\allowdisplaybreaks
\begin{align}
|B(\xi,\zeta)|&\le \varepsilon_0 (|\xi'|^2+|\zeta'|^2) ^{1/2} .\label{eq:estB}
\end{align}
For $C(\xi,\zeta)$ and $ D(\xi,\zeta) $, since
\begin{align}
&\left|g\left(\xi'\right)|\xi'|^2-g\left(\zeta'\right)|\zeta'|^2\right|\nonumber\\
& \le \left|g\left(\xi'\right)|\xi'|^2-g\left(\zeta'\right)|\xi'|^2\right|+\left|g\left(\zeta'\right)|\xi'|^2-g\left(\zeta'\right)|\zeta'|^2\right|\nonumber\\
&\le \varepsilon_0 |\xi'-\zeta'| |\xi'|^2+ \varepsilon_0 |\zeta'|\left(|\xi'|+|\zeta'|\right)\cdot \left(|\xi'|-|\zeta'|\right)\nonumber\\
&\le \left[ \varepsilon_0|\xi'|^2+  \varepsilon_0|\zeta'|(|\xi'|+|\zeta'|)\right]|\xi'-\zeta'|\nonumber\\
&\le  \frac{3\varepsilon_0 (|\xi'|^2 + |\zeta'|^2)}{2}\cdot |\xi'-\zeta'|, \nonumber
\end{align}
\endgroup
where we used $g(0)=0$ and $|\nabla_{x'}g(x')|\le \varepsilon_0$, then we have
\begin{align}
|C(\xi,\zeta)|&\leq \frac{3 \varepsilon_0 (|\xi'|^2 + |\zeta'|^2)}{2}\label{eq:estC}
\end{align}
and
\begingroup
\allowdisplaybreaks
\begin{align}
|D(\xi,\zeta)|&=\frac{\left(h(\xi')-h(\zeta')\right)^2}{|\xi-\zeta|^2}\nonumber\\
&\le\frac{2\left(\frac{1}{2}\sum_{i=1}^{n-1}\left(\alpha_i\xi_i^2-\alpha_i\zeta_i^2\right)\right)^2}{|\xi-\zeta|^2}+\frac{2\left(g\left( \xi' \right)|\xi'|^2- g\left( \zeta' \right)|\zeta'|^2\right)^2}{|\xi-\zeta|^2}\nonumber\\
&\leq\frac{(|\xi'|^2+|\zeta'|^2)\left(\sum_{i=1}^{n-1}\alpha_i^2\right) |\xi'-\zeta'|^2}{|\xi-\zeta|^2}+\frac{2\left(\frac{3\varepsilon_0 (|\xi'|^2 + |\zeta'|^2)}{2}|\xi'-\zeta'|\right)^2}{ |\xi-\zeta|^2}\nonumber\\
&\leq (n-1)\varepsilon_0^2 (|\xi'|^2+|\zeta'|^2)+\frac{9 \varepsilon_0^2}{2} (|\xi'|^2+|\zeta'|^2)^2.\label{eq:estD}
\end{align}
\endgroup
Hence, for every $(\xi,\zeta)\in U\times U$, which satisfies $|\xi'|<R_0$ and $|\zeta'|<R_0$, there holds
\[
B(\xi,\zeta) + C(\xi,\zeta) +D(\xi,\zeta) \le 2R_0\varepsilon_0+ 3R_0^2\varepsilon_0 + 2(n-1)R_0^2\varepsilon_0^2 + 18R_0^4\varepsilon_0^2.
\] 
Thus, if we choose $ \varepsilon_0 $ to be sufficiently small, each of $B(\xi,\zeta), C(\xi,\zeta)$ and $D(\xi,\zeta)$ is small, so that we can have the Taylor expansion of $ A(\xi,\zeta) $. To be more explicitly, first we can choose a proper $ \va_0 $ such that $ |B(\xi,\zeta)|+|C(\xi,\zeta)|+|D(\xi,\zeta)|<\frac{1}{2} $ for all $ (\xi,\zeta)\in U \times U $. Then we can choose a constant $ A_1>0 $ such that when $ |a|<1/2 $,
\begin{align}
(1+a)^{-\frac{n+2\sigma}{2}}\leq 1-\frac{n+2\sigma}{2}a+A_1 a^2.\nonumber
\end{align}
Therefore, if we denote $ E(\xi,\zeta) := B(\xi,\zeta)+C(\xi,\zeta)+D(\xi,\zeta) $, then we have
\begin{align}
&A(\xi,\zeta)|\xi-\zeta|^{n+2\sigma}\nonumber\\
&\leq1-\frac{(n+2\sigma)}{2} B(\xi,\zeta)-\frac{n+2\sigma}{2}C(\xi,\zeta) -\frac{n+2\sigma}{2}D(\xi,\zeta)+A_1 E(\xi,\zeta)^2\nonumber\\
&\le 1-\frac{(n+2\sigma)}{|\xi-\zeta|^2}(\xi_n-\zeta_n)\left(\frac{1}{2}\sum_{i=1}^{n-1}\alpha_i\xi_i^2-\frac{1}{2}\sum_{i=1}^{n-1}\alpha_i\zeta_i^2\right)+F(\xi,\zeta),\label{eq:taylorexpansion}
\end{align} 
where 
\begin{align*}
F(\xi,\zeta)&=\frac{n+2\sigma}{2}|C(\xi,\zeta)| +\frac{n+2\sigma}{2}D(\xi,\zeta)+A_1 E(\xi,\zeta)^2.
\end{align*} 

Therefore, it follows from \eqref{eq:mainchangeofvariable}, \eqref{eq:denominator} and \eqref{eq:taylorexpansion} that
\begingroup
\allowdisplaybreaks
\begin{align}
&I_{n,\sigma,\Omega_{\mu}}[v_{\lambda}]\nonumber\\
&=\iint_{U\times U}\frac{|\theta_{\lambda}(\xi)-\theta_{\lambda}(\zeta)|^2}{\left(|\xi'-\zeta'|^2+\left(\xi_n+h_{\mu}(\xi')-\zeta_n-h_{\mu}(\zeta')\right)^2\right)^{\frac{n+2\sigma}{2}}}\,\ud\xi \ud\zeta\nonumber\\
&\leq\iint_{U\times U}\frac{|\theta_{\lambda}(\xi)-\theta_{\lambda}(\zeta)|^2}{|\xi-\zeta|^{n+2\sigma}}\,\ud\xi \ud\zeta\nonumber\\
&\quad- \frac{(n+2\sigma)}{2} \iint_{U\times U}\frac{(\xi_n-\zeta_n)\left(\sum_{i=1}^{n-1}\alpha_i\xi_i^2-\sum_{i=1}^{n-1}\alpha_i\zeta_i^2\right)|\theta_{\lambda}(\xi)-\theta_{\lambda}(\zeta)|^2}{|\xi-\zeta|^{n+2\sigma+2}}\,\ud\xi \ud\zeta\nonumber\\
&\quad+\iint_{U\times U}\frac{F(\xi,\zeta)|\theta_{\lambda}(\xi)-\theta_{\lambda}(\zeta)|^2}{|\xi-\zeta|^{n+2\sigma}}\,\ud\xi \ud\zeta.\label{eq:expansionofI}
\end{align}
\endgroup

We are going to estimate each term in the right hand side of \eqref{eq:expansionofI}.

We start with estimating the third term there. By using \eqref{eq:estB}, \eqref{eq:estC} and \eqref{eq:estD}, there exists a positive constant $C$ which depends only on $n$, $\sigma$ and $R_0$ such that
\begin{align*}
F(\xi,\zeta)& \le C\varepsilon_0 (|\xi'|^2 + |\zeta'|^2)
\end{align*} 
for all $(\xi,\zeta)\in U\times U \subset B_{R_0}^+\times B_{R_0}^+.$ Therefore,
\begingroup
\allowdisplaybreaks
\begin{align*}
&\iint_{U\times U}\frac{F(\xi,\zeta)|\theta_{\lambda}(\xi)-\theta_{\lambda}(\zeta)|^2}{|\xi-\zeta|^{n+2\sigma}}\,\ud\xi \ud\zeta \\
&\le \frac{C\varepsilon_0}{\lambda^{2}} \iint_{B_{\lambda R_0}^+\times B_{\lambda R_0}^+}\frac{(|\xi'|^2 + |\zeta'|^2)|\eta(\lambda^{-1}\xi)\Theta(\xi)-\eta(\lambda^{-1}\zeta)\Theta(\zeta)|^2}{|\xi-\zeta|^{n+2\sigma}}\,\ud\xi \ud\zeta\\
&\le \frac{2C\varepsilon_0}{\lambda^{2}} \iint_{B_{\lambda R_0}^+\times B_{\lambda R_0}^+}\frac{|\zeta'|^2|\eta(\lambda^{-1}\xi)\Theta(\xi)-\eta(\lambda^{-1}\zeta)\Theta(\zeta)|^2}{|\xi-\zeta|^{n+2\sigma}}\,\ud\xi \ud\zeta.
\end{align*}
\endgroup
By the Cauchy-Schwarz inequality,  we have
\begin{align}
&|\eta(\lambda^{-1}\xi)\Theta(\xi)-\eta(\lambda^{-1}\zeta)\Theta(\zeta)|^2\nonumber\\
&\le  2 |\eta(\lambda^{-1}\xi)|^2 |\Theta(\xi)-\Theta(\zeta)|^2 + 2 |\eta(\lambda^{-1}\xi)-\eta(\lambda^{-1}\zeta)|^2 |\Theta(\zeta)|^2\nonumber\\
&\le  2  |\Theta(\xi)-\Theta(\zeta)|^2 + 2 |\eta(\lambda^{-1}\xi)-\eta(\lambda^{-1}\zeta)|^2 |\Theta(\zeta)|^2.\label{eq:CWcutoff}
\end{align}
Since for every $\zeta\in\R^n_+$,
\begingroup
\allowdisplaybreaks
\begin{align*}
&\int_{\R^n_+} \frac{ |\eta(\lambda^{-1}\xi)-\eta(\lambda^{-1}\zeta)|^2}{|\xi-\zeta|^{n+2\sigma}}\,\ud\xi\\
&\le \int_{\{|\xi-\zeta|<\lambda\}} \frac{C}{\lambda^{2}|\xi-\zeta|^{n+2\sigma-2}}\,\ud\xi+\int_{\{|\xi-\zeta|\ge \lambda\}} \frac{4}{|\xi-\zeta|^{n+2\sigma}}\,\ud\xi\\
&\le \frac{C}{\lambda^{2\sigma}},
\end{align*}
\endgroup
we obtain
\begingroup
\allowdisplaybreaks
\begin{align*}
&\iint_{B_{\lambda R_0}^+\times B_{\lambda R_0}^+}  \frac{ |\zeta'|^2 |\eta(\lambda^{-1}\xi)-\eta(\lambda^{-1}\zeta)|^2 |\Theta(\zeta)|^2 }{|\xi-\zeta|^{n+2\sigma-2}}\,\ud\xi\ud\zeta\\
&\le C \int_{B_{\lambda R_0}^+} |\zeta'|^2 |\Theta(\zeta)|^2 \int_{\R^n_+} \frac{ |\eta(\lambda^{-1}\xi)-\eta(\lambda^{-1}\zeta)|^2}{|\xi-\zeta|^{n+2\sigma}}\,\ud\xi\ \ud\zeta\\
&\le C \int_{B_{\lambda R_0}^+}|\zeta'|^2 |\Theta(\zeta)|^2 \,\ud \zeta\\
&\le C(1+\lambda^{4-n})\lambda^{-2\sigma}\\
&\le C,
\end{align*}
\endgroup
where we used Proposition \ref{lem:minimizerbound} and $n\ge 3$. Then by using  \eqref{eq:integrability2}, we obtain that
\begin{align}\label{eq:limitestimate}
\iint_{U\times U}\frac{F(\xi,\zeta)|\theta_{\lambda}(\xi)-\theta_{\lambda}(\zeta)|^2}{|\xi-\zeta|^{n+2\sigma}}d\xi d\zeta\leq \frac{C\varepsilon_0}{\lambda^{2\sigma}}.
\end{align}

Next, we  estimate the second term in the right hand side of \eqref{eq:expansionofI}. For every $i=1,\cdots,n-1$, we have
\begingroup
\allowdisplaybreaks
\begin{align}
&\iint_{U\times U}\frac{(\xi_n-\zeta_n)\left(\sum_{i=1}^{n-1}\alpha_i\xi_i^2-\sum_{i=1}^{n-1}\alpha_i\zeta_i^2\right)|\theta_{\lambda}(\xi)-\theta_{\lambda}(\zeta)|^2}{|\xi-\zeta|^{n+2\sigma+2}}\,\ud\xi \ud\zeta\nonumber\\
& = \iint_{B_1^+\times B_1^+}\frac{(\xi_n-\zeta_n)\left(\sum_{i=1}^{n-1}\alpha_i\xi_i^2-\sum_{i=1}^{n-1}\alpha_i\zeta_i^2\right)|\Theta_{\lambda}(\xi)-\Theta_{\lambda}(\zeta)|^2}{|\xi-\zeta|^{n+2\sigma+2}}\,\ud\xi \ud\zeta\nonumber\\
&\quad + \iint_{(U\times U)\setminus (B_1^+\times B_1^+)}\frac{(\xi_n-\zeta_n)\left(\sum_{i=1}^{n-1}\alpha_i\xi_i^2-\sum_{i=1}^{n-1}\alpha_i\zeta_i^2\right)|\theta_{\lambda}(\xi)-\theta_{\lambda}(\zeta)|^2}{|\xi-\zeta|^{n+2\sigma+2}}\,\ud\xi \ud\zeta\nonumber\\
& = \iint_{\R^n_+\times\R^n_+}\frac{(\xi_n-\zeta_n)\left(\sum_{i=1}^{n-1}\alpha_i\xi_i^2-\sum_{i=1}^{n-1}\alpha_i\zeta_i^2\right)|\Theta_{\lambda}(\xi)-\Theta_{\lambda}(\zeta)|^2}{|\xi-\zeta|^{n+2\sigma+2}}\,\ud\xi \ud\zeta\nonumber\\
&\quad - \iint_{(\R^n_+\times\R^n_+)\setminus(B_1^+\times B_1^+)}\frac{(\xi_n-\zeta_n)\left(\sum_{i=1}^{n-1}\alpha_i\xi_i^2-\sum_{i=1}^{n-1}\alpha_i\zeta_i^2\right)|\Theta_{\lambda}(\xi)-\Theta_{\lambda}(\zeta)|^2}{|\xi-\zeta|^{n+2\sigma+2}}\,\ud\xi \ud\zeta\nonumber\\
&\quad + \iint_{(U\times U)\setminus (B_1^+\times B_1^+)}\frac{(\xi_n-\zeta_n)\left(\sum_{i=1}^{n-1}\alpha_i\xi_i^2-\sum_{i=1}^{n-1}\alpha_i\zeta_i^2\right)|\theta_{\lambda}(\xi)-\theta_{\lambda}(\zeta)|^2}{|\xi-\zeta|^{n+2\sigma+2}}\,\ud\xi \ud\zeta. \label{eq:term2-1}
\end{align}
\endgroup
Using Theorem \ref{thm:symmetry}, we have
\begingroup
\allowdisplaybreaks
\begin{align}
&\iint_{\R^n_+\times\R^n_+}\frac{(\xi_n-\zeta_n)\left(\sum_{i=1}^{n-1}\alpha_i\xi_i^2-\sum_{i=1}^{n-1}\alpha_i\zeta_i^2\right)|\Theta_{\lambda}(\xi)-\Theta_{\lambda}(\zeta)|^2}{|\xi-\zeta|^{n+2\sigma+2}}\,\ud\xi \ud\zeta\nonumber\\
&=\frac{\sum_{i=1}^{n-1}\alpha_i}{n-1} \iint_{\R^n_+\times\R^n_+}\frac{(\xi_n-\zeta_n)\left(|\xi'|^2- |\zeta'|^2\right)|\Theta_{\lambda}(\xi)-\Theta_{\lambda}(\zeta)|^2}{|\xi-\zeta|^{n+2\sigma+2}}\,\ud\xi \ud\zeta\nonumber\\
&=\frac{H \Gamma_0}{\lambda},\label{eq:term2-2}
\end{align}
\endgroup
where $\Gamma_0$ is given in \eqref{eq:integralsign}, and 
\[
H=\frac{1}{n-1}\sum_{i=1}^{n-1}\alpha_i\quad\mbox{is the mean curvature.}
\]
Since
\[
\frac{ |\xi_n-\zeta_n|\left|\xi_i^2-\zeta_i^2\right|}{|\xi-\zeta|^{n+2\sigma+2}}\le \frac{(|\xi_i|+|\zeta_i|)}{2|\xi-\zeta|^{n+2\sigma}},
\]
we have 
\begingroup
\allowdisplaybreaks
\begin{align}
&\left|\lambda\iint_{(\R^n_+\times\R^n_+)\setminus(B_1^+\times B_1^+)}\frac{(\xi_n-\zeta_n)\left(\sum_{i=1}^{n-1}\alpha_i\xi_i^2-\sum_{i=1}^{n-1}\alpha_i\zeta_i^2\right)|\Theta_{\lambda}(\xi)-\Theta_{\lambda}(\zeta)|^2}{|\xi-\zeta|^{n+2\sigma+2}}\,\ud\xi \ud\zeta\right|\nonumber\\
&= \left|\iint_{(\R^n_+\times\R^n_+)\setminus(B_\lambda^+\times B_\lambda^+)}\frac{(\xi_n-\zeta_n)\left(\sum_{i=1}^{n-1}\alpha_i\xi_i^2-\sum_{i=1}^{n-1}\alpha_i\zeta_i^2\right)|\Theta(\xi)-\Theta(\zeta)|^2}{|\xi-\zeta|^{n+2\sigma+2}}\,\ud\xi \ud\zeta\right|\nonumber\\
&\le \frac{\va_0}{2}\sum_{i=1}^{n-1} \left|\iint_{(\R^n_+\times\R^n_+)\setminus(B_\lambda^+\times B_\lambda^+)}\frac{\left( |\xi_i|+|\zeta_i|\right)|\Theta(\xi)-\Theta(\zeta)|^2}{|\xi-\zeta|^{n+2\sigma}}\,\ud\xi \ud\zeta\right|\nonumber\\
&\le C \va_0\lambda^{1-2\sigma},\label{eq:term2-3}
\end{align}
\endgroup
where we used \eqref{eq:integralsignest} in the last inequality, and 
\begingroup
\allowdisplaybreaks
\begin{align}
&\left|\lambda\iint_{(U\times U)\setminus (B_1^+\times B_1^+)}\frac{(\xi_n-\zeta_n)\left(\sum_{i=1}^{n-1}\alpha_i\xi_i^2-\sum_{i=1}^{n-1}\alpha_i\zeta_i^2\right)|\theta_{\lambda}(\xi)-\theta_{\lambda}(\zeta)|^2}{|\xi-\zeta|^{n+2\sigma+2}}\,\ud\xi \ud\zeta\right|\nonumber\\
&\le \frac{\va_0}{2}\sum_{i=1}^{n-1} \iint_{(\R^n_+\times\R^n_+)\setminus(B_\lambda^+\times B_\lambda^+)}\frac{(|\xi_i|+|\zeta_i|)|\eta(\lambda^{-1}\xi)\Theta(\xi)-\eta(\lambda^{-1}\zeta)\Theta(\zeta)|^2}{|\xi-\zeta|^{n+2\sigma}}\,\ud\xi \ud\zeta\nonumber\\
&= \frac{\va_0}{2}\sum_{i=1}^{n-1} \iint_{(\R^n_+\times\R^n_+)\setminus(B_\lambda^+\times B_\lambda^+)}\frac{|\zeta_i||\eta(\lambda^{-1}\xi)\Theta(\xi)-\eta(\lambda^{-1}\zeta)\Theta(\zeta)|^2}{|\xi-\zeta|^{n+2\sigma}}\,\ud\xi \ud\zeta\nonumber\\
&\le \frac{\va_0}{2}\sum_{i=1}^{n-1} \iint_{(\R^n_+\times\R^n_+)\setminus(B_\lambda^+\times B_\lambda^+)}\frac{2|\zeta_i| |\Theta(\xi)-\Theta(\zeta)|^2}{|\xi-\zeta|^{n+2\sigma}}\,\ud\xi \ud\zeta\nonumber\\
&\quad + \frac{\va_0}{2}\sum_{i=1}^{n-1} \iint_{(\R^n_+\times\R^n_+)\setminus(B_\lambda^+\times B_\lambda^+)}\frac{2|\zeta_i| |\Theta(\zeta)|^2 |\eta(\lambda^{-1}\xi)-\eta(\lambda^{-1}\zeta)|^2}{|\xi-\zeta|^{n+2\sigma}}\,\ud\xi \ud\zeta,\label{eq:term2-4}
\end{align}
\endgroup
where we used \eqref{eq:CWcutoff} in the last inequality. Since
\begingroup
\allowdisplaybreaks
\begin{align*}
&\int_{\R^n_+ \setminus B_\lambda^+} |\zeta_i| |\Theta(\zeta)|^2\,\ud\zeta \int_{\R^n}\frac{ |\eta(\lambda^{-1}\xi)-\eta(\lambda^{-1}\zeta)|^2}{|\xi-\zeta|^{n+2\sigma}}\, \ud\xi\\
&=\int_{B_{4\lambda}^+ \setminus B_\lambda^+} |\zeta_i| |\Theta(\zeta)|^2\,\ud\zeta \int_{\R^n}\frac{ |\eta(\lambda^{-1}\xi)-\eta(\lambda^{-1}\zeta)|^2}{|\xi-\zeta|^{n+2\sigma}}\, \ud\xi\\
&\quad+ \int_{\R^n_+ \setminus B_{4\lambda}^+} |\zeta_i| |\Theta(\zeta)|^2\,\ud\zeta \int_{B_{3\lambda}^+}\frac{1}{|\xi-\zeta|^{n+2\sigma}}\, \ud\xi\\
& \le C\lambda^{3-n-2\sigma}+C \lambda^n \int_{\R^n_+ \setminus B_{4\lambda}^+} |\zeta|^{1-n-2\sigma} |\Theta(\zeta)|^2\,\ud\zeta \\
& \le C\lambda^{3-n-2\sigma}\\
&\le C\lambda^{1-2\sigma},
\end{align*}
\endgroup
and
\begingroup
\allowdisplaybreaks
\begin{align*}
&\int_{B_\lambda^+} |\zeta_i| |\Theta(\zeta)|^2\,\ud\zeta \int_{\R^n_+ \setminus B_\lambda^+}\frac{ |\eta(\lambda^{-1}\xi)-\eta(\lambda^{-1}\zeta)|^2}{|\xi-\zeta|^{n+2\sigma}}\, \ud\xi\\
&=\int_{B_\lambda^+} |\zeta_i| |\Theta(\zeta)|^2\,\ud\zeta \int_{\R^n_+ \setminus B_{2\lambda}^+}\frac{ |\eta(\lambda^{-1}\xi)-\eta(\lambda^{-1}\zeta)|^2}{|\xi-\zeta|^{n+2\sigma}}\, \ud\xi\\
&\le C \lambda^{-2\sigma} \int_{B_\lambda^+} |\zeta_i| |\Theta(\zeta)|^2\,\ud\zeta \\
&\le C \lambda^{-2\sigma} (1+\lambda^{3-n})\\
&\le C\lambda^{1-2\sigma},
\end{align*}
\endgroup
we obtain from \eqref{eq:term2-4} that
\begin{align}
&\left|\lambda\iint_{(U\times U)\setminus (B_1^+\times B_1^+)}\frac{(\xi_n-\zeta_n)\left(\sum_{i=1}^{n-1}\alpha_i\xi_i^2-\sum_{i=1}^{n-1}\alpha_i\zeta_i^2\right)|\theta_{\lambda}(\xi)-\theta_{\lambda}(\zeta)|^2}{|\xi-\zeta|^{n+2\sigma+2}}\,\ud\xi \ud\zeta\right|\nonumber\\
&\le C\va_0\lambda^{1-2\sigma}.\label{eq:term2-5}
\end{align}
Therefore, it follows from \eqref{eq:term2-1}, \eqref{eq:term2-2}, \eqref{eq:term2-3} and \eqref{eq:term2-5} that we obtain the estimate for the second term in the right hand side of \eqref{eq:expansionofI}:
\begin{align}
&\left|\iint_{U\times U}\frac{(\xi_n-\zeta_n)\left(\sum_{i=1}^{n-1}\alpha_i\xi_i^2-\sum_{i=1}^{n-1}\alpha_i\zeta_i^2\right)|\theta_{\lambda}(\xi)-\theta_{\lambda}(\zeta)|^2}{|\xi-\zeta|^{n+2\sigma+2}}\,\ud\xi \ud\zeta-\frac{H \Gamma_0}{\lambda}\right|\nonumber\\
&\le C \varepsilon_0 \lambda^{-2\sigma}\label{eq:leadingterm}.
\end{align} 
Combining \eqref{eq:expansionofI}, \eqref{eq:limitestimate} and \eqref{eq:leadingterm}, we have 
\begin{align}
I_{n,\sigma,\Omega}[v_{\lambda}]&\leq\iint_{U \times U}\frac{|\theta_{\lambda}(\xi)-\theta_{\lambda}(\zeta)|^2}{|\xi-\zeta|^{n+2\sigma}} \ud\xi\ud\zeta-\frac{(n+2\sigma)H \Gamma_0}{2\lambda}+\frac{C\varepsilon_0 }{\lambda^{2\sigma}}\label{eq:energynumerator}.
\end{align}
From the proof of Theorem 1.3 in \cite{FJX}, we have
\begin{align}
\iint_{U\times U}\frac{|\theta_{\lambda}(\xi)-\theta_{\lambda}(\zeta)|^2}{|\xi-\zeta|^{n+2\sigma}}\,\ud\xi\ud\zeta& \le S_{n,\sigma}(\mathbb{R}_+^n)-c\lambda^{-2\sigma}+C\lambda^{-n-2\sigma+2}\label{eq:energynumerator2}\\
\int_{\Omega}|v_{\lambda}|^{\frac{2n}{n-2\sigma}}\,\ud x \ge \int_{B_{4}}|\theta_{\lambda}|^{\frac{2n}{n-2\sigma}}\,\ud \xi &\ge 1-c\lambda^{-\frac{n(n+2\sigma-2)}{n-2\sigma}},\label{eq:energydenominator}
\end{align}
where $c$ and $C$ are positive constants depending only on $n$ and $\sigma$. Hence, we have from \eqref{eq:energynumerator}, \eqref{eq:energynumerator2} and \eqref{eq:energydenominator} that 
\begin{align}
&\frac{I_{n,\sigma,\Omega}[v_{\lambda}]}{\left(\int_{\Omega}|v_{\lambda}|^{\frac{2n}{n-2\sigma}}dx\right)^{\frac{n-2\sigma}{n}}}\nonumber\\
&\leq\left(1+C\lambda^{-\frac{n(n+2\sigma-2)}{n-2\sigma}}\right)\cdot \left(S_{n,\sigma}(\mathbb{R}_+^n)-\frac{(n+2\sigma)H \Gamma_0}{2\lambda}-(c-C\varepsilon_0)\lambda^{-2\sigma}+C\lambda^{-n-2\sigma+2}\right)\nonumber\\
&\le S_{n,\sigma}(\mathbb{R}_+^n)-\frac{(n+2\sigma)H \Gamma_0}{2\lambda}-(c-C\varepsilon_0)\lambda^{-2\sigma}+C\lambda^{-n+1}\label{eq:finalexp1}.
\end{align}
Without knowing the sign of $\Gamma_0$, we use the crude estimate that $|H|\le \va_0$. Therefore,
\[
\frac{I_{n,\sigma,\Omega}[v_{\lambda}]}{\left(\int_{\Omega}|v_{\lambda}|^{\frac{2n}{n-2\sigma}}dx\right)^{\frac{n-2\sigma}{n}}}\le S_{n,\sigma}(\mathbb{R}_+^n)-\frac{(n+2\sigma) \Gamma_0\va_0}{2\lambda}-(c-C\varepsilon_0)\lambda^{-2\sigma}+C\lambda^{-n+1}
\]
By choosing  $\lambda$ large and then choosing $\varepsilon_0$ small, we obtain $ S_{n,\sigma}(\Omega)< S_{n,\sigma}(\mathbb{R}_+^n) $. 
\end{proof}

\begin{proof}[Proof of Theorem \ref{thm:generaldomains}]
As mentioned earlier, we can assume $a=0$ and there exists $\delta_0>0$ such that $\partial\Omega\cap B_{\delta_0}$ can be represented by \eqref{eq:graphfunction} after a necessary coordinate rotation.  Since  the sharp constant $ S_{n,\sigma}(\Omega)$ does not change under dilations, we can have a dilation of $ \Omega $ with a sufficiently large number $ \mu $. The domain after dilation is denoted as $$ \Omega_{\mu}:=\{\mu x: x\in \Omega\}.$$ Then the boundary $\partial\Omega_\mu\cap B_{\mu\delta_0}$ is presented by 
$$
x_n=h_{\mu}(x'):=\frac{1}{2\mu}\sum_{i=1}^{n-1}\alpha_i x_{i}^2+\frac{1}{\mu} g\left(\frac{1}{\mu}x'\right)|x'|^2.
$$ 
Choose $\mu$ large so that  $(B_8\cap \mathscr{R}_\mu)\subset \Omega_{\mu}$, where $\mathscr{R}_\mu := \{x\in\R^n: x_n>h_{\mu}(x')\}$. 

Let $\Phi_{\mu}:\Omega_{\mu} \rightarrow\mathbb{R}^n$ defined as $\xi=\Phi_{\mu}(x)=(x_1,\cdots, x_{n-1}, x_n-h_{\mu}(x'))$. Let $ \Theta(x) $ be a minimizer of $ S_{n,\sigma}(\mathbb{R}_+^n) $ that is radially symmetric in the first $n-1$ variables as before, and $\Theta_{\lambda}(x)=\lambda^{\frac{n-2\sigma}{2}}\Theta({\lambda}x)$ for $\lambda>0$. 
Let $ \eta $ be a cut off function such that $ \eta\in C^{1}(\mathbb{R}^n) $, $ \eta\equiv 1 $ in $ B_{2} $, $ 0\leq \eta\leq 1 $ in $ B_{3} $ and $ \eta\equiv 0 $ in $ B_{3}^{c} $.  Let $ \theta_{\lambda}(x)=(\eta\Theta_{\lambda})(x) $ and $ v_{\lambda}=\theta_{\lambda}\circ\Phi_{\mu}(x) $. Then we have 
\begingroup
\allowdisplaybreaks     
\begin{align}
I_{n,\sigma,\Omega_{\mu}}[v_{\lambda}]&=\iint_{\Omega_{\mu}\times\Omega_{\mu}}\frac{|v_{\lambda}(x)-v_{\lambda}(y)|^2}{|x-y|^{n+2\sigma}}\,\ud x \ud y\nonumber\\
&=\iint_{(\Omega_{\mu}\cap B_8)\times (\Omega_{\mu}\cap B_8)}\frac{|v_{\lambda}(x)-v_{\lambda}(y)|^2}{|x-y|^{n+2\sigma}}\,\ud x \ud y\nonumber\\
&\quad+2\int_{\Omega_{\mu}\cap B_8}|v_{\lambda}(x)|^2\int_{\Omega_{\mu}\backslash  B_8}\frac{\ud y}{|x-y|^{n+2\sigma}}\,\ud x\nonumber.
\end{align}
\endgroup
Since
\begingroup
\allowdisplaybreaks
\begin{align*}
&\int_{\Omega_{\mu}\cap B_8}|v_{\lambda}(x)|^2\int_{\Omega_{\mu}\backslash  B_8}\frac{\ud y}{|x-y|^{n+2\sigma}}\,\ud x \\
&=2\int_{\Omega_{\mu} \cap B_{4}}|v_{\lambda}(x)|^2\int_{\Omega_{\mu}\backslash B_8}\frac{\ud y}{|x-y|^{n+2\sigma}}\,\ud x\\
&\leq \frac{C(n,\sigma)}{\lambda^{2\sigma}}\int_{\mathbb{R}_+^n}|\Theta(x)|^2dx \nonumber\\
&\leq \frac{C(n,\sigma)}{\lambda^{2\sigma}},
\end{align*}
\endgroup
where we used Proposition \ref{lem:minimizerbound} in the last inequality, it follows from \eqref{eq:energynumerator} and \eqref{eq:energynumerator2} that 
\[
I_{n,\sigma,\Omega_{\mu}}[v_{\lambda}]\le S_{n,\sigma}(\mathbb{R}_+^n) -\frac{(n+2\sigma)\Gamma_0 H}{2\mu\lambda}+C\lambda^{-2\sigma}.
\]
This together with \eqref{eq:energydenominator} shows that 
\[
S_{n,\sigma}(\Omega)\le S_{n,\sigma}(\mathbb{R}_+^n)-\frac{(n+2\sigma) \Gamma_0 H}{2\mu\lambda}+C\lambda^{-2\sigma}.
\]
\end{proof}


\appendix

\section{Proof of Proposition \ref{prop:integrability}}\label{sec:appendix}
We first prove \eqref{eq:integrability2}.  

\begin{proof}[Proof of \eqref{eq:integrability2}]
We suppose $\lambda>100$ is very large. Note that
\[
\iint_{B_\lambda^+\times B_\lambda^+}\frac{ (|\xi'|^\gamma+|\zeta'|^\gamma) |\Theta(\xi)-\Theta(\zeta)|^2}{|\xi-\zeta|^{n+2\sigma}}\ud\xi \ud\zeta=2\iint_{B_\lambda^+\times B_\lambda^+}\frac{ |\xi'|^\gamma |\Theta(\xi)-\Theta(\zeta)|^2}{|\xi-\zeta|^{n+2\sigma}}\ud\xi \ud\zeta.
\]

First, it is clear that
\begin{align}\label{eq:integrability2-1}
\int_{B_{10}^+} |\xi'|^\gamma\,\ud \xi \int_{B_\lambda^+}\frac{|\Theta(\xi)-\Theta(\zeta)|^2}{|\xi-\zeta|^{n+2\sigma}}\,\ud\zeta\le 10^\gamma \iint_{\mathbb{R}_+^n\times\mathbb{R}_+^n}\frac{  |\Theta(\xi)-\Theta(\zeta)|^2}{|\xi-\zeta|^{n+2\sigma}}\ud\xi \ud\zeta<\infty.
\end{align}

Secondly, using the Cauchy-Schwarz inequality, we have
\begin{align}
&\int_{B_\lambda^+\setminus B_{10}^+} |\xi'|^\gamma\,\ud \xi \int_{\{\zeta\in B_\lambda^+:\, |\zeta-\xi|\ge 1\}}\frac{|\Theta(\xi)-\Theta(\zeta)|^2}{|\xi-\zeta|^{n+2\sigma}}\,\ud\zeta\nonumber\\
&\le C \int_{B_\lambda^+\setminus B_{10}^+} |\xi'|^\gamma \Theta(\xi)^2\,\ud \xi + \int_{B_\lambda^+\setminus B_{10}^+} |\xi'|^\gamma\,\ud \xi \int_{\{\zeta\in B_\lambda^+:\, |\zeta-\xi|\ge 1\}}\frac{2\Theta(\zeta)^2}{|\xi-\zeta|^{n+2\sigma}}\,\ud\zeta\label{eq:integrability2-2}.
\end{align}
We have
\begin{align}
\int_{B_\lambda^+\setminus B_{10}^+} |\xi'|^\gamma\,\ud \xi \int_{\{\zeta\in B_\lambda^+:\, |\zeta-\xi|\ge 1,\ |\zeta|\le |\xi|/2\}}\frac{2\Theta(\zeta)^2}{|\xi-\zeta|^{n+2\sigma}}\,\ud\zeta  & \le C \int_{B_\lambda^+\setminus B_{10}^+} \frac{|\xi'|^\gamma}{|\xi|^{n+2\sigma}}\,\ud \xi\nonumber\\
&\le C(1+\lambda^{\gamma-2\sigma}),\nonumber
\end{align}
and
\begin{align}
\int_{B_\lambda^+\setminus B_{10}^+} |\xi'|^\gamma\,\ud \xi \int_{\{\zeta\in B_\lambda^+:\, |\zeta-\xi|\ge 1,\ |\zeta|\ge |\xi|/2\}}\frac{2\Theta(\zeta)^2}{|\xi-\zeta|^{n+2\sigma}}\,\ud\zeta  &\le C \int_{B_\lambda^+\setminus B_{10}^+} \frac{|\xi'|^\gamma} {|\xi|^{2n-2}}\,\ud \xi \nonumber\\
&<C(1+\lambda^{\gamma-2\sigma}),\nonumber
\end{align}
where we used \eqref{eq:minimizerbound} and $n\ge 4>2+2\sigma$.  Hence, it follows from \eqref{eq:integrability2-2} that 
\begin{align}
&\int_{B_\lambda^+ \setminus B_{10}^+} |\xi'|^\gamma\,\ud \xi \int_{\{\zeta\in B_\lambda^+:\, |\zeta-\xi|\ge 1\}}\frac{|\Theta(\xi)-\Theta(\zeta)|^2}{|\xi-\zeta|^{n+2\sigma}}\,\ud\zeta\le C(1+\lambda^{\gamma-2\sigma}).\label{eq:integrability2-3}
\end{align}

Finally, if we denote $\widetilde \Theta(\xi)=\Theta(\xi) \xi_{n}^{1-2\sigma}$, then
\begingroup
\allowdisplaybreaks
\begin{align}
&\int_{B_\lambda^+\setminus B_{10}^+} |\xi'|^\gamma\,\ud \xi \int_{\{\zeta\in B_\lambda^+:\, |\zeta-\xi|\le 1\}}\frac{|\Theta(\xi)-\Theta(\zeta)|^2}{|\xi-\zeta|^{n+2\sigma}}\,\ud\zeta\nonumber\\
&= \int_{B_\lambda^+ \setminus B_{10}^+} |\xi'|^\gamma\,\ud \xi \int_{\{\zeta\in B_\lambda^+:\, |\zeta-\xi|\le 1\}}\frac{|\widetilde\Theta(\xi) \xi_{n}^{2\sigma-1}-\widetilde\Theta(\zeta)\zeta_n^{2\sigma-1}|^2}{|\xi-\zeta|^{n+2\sigma}}\,\ud\zeta\nonumber\\
&\le \int_{B_\lambda^+ \setminus B_{10}^+} |\xi'|^\gamma\widetilde\Theta(\xi)^2 \,\ud \xi \int_{\{\zeta\in B_\lambda^+:\, |\zeta-\xi|\le 1\}}\frac{| \xi_{n}^{2\sigma-1}-\zeta_n^{2\sigma-1}|^2}{|\xi-\zeta|^{n+2\sigma}}\,\ud\zeta\nonumber\\
&\quad+ \int_{B_\lambda^+ \setminus B_{10}^+} |\xi'|^\gamma\,\ud \xi \int_{\{\zeta\in B_\lambda^+:\, |\zeta-\xi|\le 1\}}\frac{|\widetilde\Theta(\xi)-\widetilde\Theta(\zeta)|^2 \zeta_n^{4\sigma-2}}{|\xi-\zeta|^{n+2\sigma}}\,\ud\zeta.\label{eq:integrability2-4}
\end{align}
\endgroup
Note that if $\xi_n\ge 3/2$, then 
\begin{align}
\int_{\{\zeta\in B_\lambda^+:\, |\zeta-\xi|\le 1\}}\frac{| \xi_{n}^{2\sigma-1}-\zeta_n^{2\sigma-1}|^2}{|\xi-\zeta|^{n+2\sigma}}\,\ud\zeta\le C \xi_n^{2\sigma-2}<C. \nonumber
\end{align}
Now let us consider $\xi_n< 3/2$. Then
\begin{align}
\int_{\{\zeta\in B_\lambda^+:\, |\zeta-\xi|\le \frac{\xi_n}{2}\}}\frac{| \xi_{n}^{2\sigma-1}-\zeta_n^{2\sigma-1}|^2}{|\xi-\zeta|^{n+2\sigma}}\,\ud\zeta &\le C \xi_n^{2\sigma-2}, \nonumber\\
\int_{\{\zeta\in B_\lambda^+:\, \frac{\xi_n}{2}<|\zeta-\xi|\le 1,\ \zeta_n<2\xi_n \}}\frac{| \xi_{n}^{2\sigma-1}-\zeta_n^{2\sigma-1}|^2}{|\xi-\zeta|^{n+2\sigma}}\,\ud\zeta &\le C \xi_n^{4\sigma-2}\cdot \xi_n^{-2\sigma}=C \xi_n^{2\sigma-2}, \nonumber\\
\int_{\{\zeta\in B_\lambda^+:\, \frac{\xi_n}{2}<|\zeta-\xi|\le 1,\ \zeta_n\ge 2\xi_n \}}\frac{| \xi_{n}^{2\sigma-1}-\zeta_n^{2\sigma-1}|^2}{|\xi-\zeta|^{n+2\sigma}}\,\ud\zeta &\le C\int_{\{\zeta\in B_\lambda^+:\, |\zeta|\le 1,\ \zeta_n\ge \xi_n \}}\frac{\zeta_n^{4\sigma-2}}{|\zeta|^{n+2\sigma}}\,\ud\zeta \le C \xi_n^{2\sigma-2}.\nonumber
\end{align}
Therefore,
\begin{align*}
 \int_{B_\lambda^+ \setminus B_{10}^+} |\xi'|^\gamma\widetilde\Theta(\xi)^2 \,\ud \xi \int_{\{\zeta\in B_\lambda^+:\, |\zeta-\xi|\le 1\}}\frac{| \xi_{n}^{2\sigma-1}-\zeta_n^{2\sigma-1}|^2}{|\xi-\zeta|^{n+2\sigma}}\,\ud\zeta & \le   C\int_{B_\lambda^+\setminus B_{10}^+} |\xi'|^\gamma \xi_n^{2\sigma-2}\widetilde\Theta(\xi)^2 \,\ud \xi\\
 &\le C(1+\lambda^{\gamma-2\sigma}), 
\end{align*}
where we used \eqref{eq:minimizerbound} and $n\ge 4$ in the last inequality. Furthermore, using \eqref{eq:gradientbound}, we have
\begin{align*}
\int_{B_\lambda^+\setminus B_{10}^+} |\xi'|^\gamma\,\ud \xi \int_{\{\zeta\in B_\lambda^+:\, |\zeta-\xi|\le 1\}}\frac{|\widetilde\Theta(\xi)-\widetilde\Theta(\zeta)|^2 \zeta_n^{4\sigma-2}}{|\xi-\zeta|^{n+2\sigma}}\,\ud\zeta &\le C \int_{B_\lambda^+\setminus B_{10}^+} |\xi'|^\gamma \frac{1}{|\xi|^{2n}}\,\ud \xi\\
 &\le C(1+\lambda^{\gamma-2\sigma}). 
\end{align*}
Hence, it follows from \eqref{eq:integrability2-4} that
\[
\int_{B_\lambda^+ \setminus B_{10}^+} |\xi'|^\gamma\,\ud \xi \int_{\{\zeta\in B_\lambda^+:\, |\zeta-\xi|\le 1\}}\frac{|\Theta(\xi)-\Theta(\zeta)|^2}{|\xi-\zeta|^{n+2\sigma}}\,\ud\zeta \le C(1+\lambda^{\gamma-2\sigma}).
\]

This, together with \eqref{eq:integrability2-1} and \eqref{eq:integrability2-3}, proves \eqref{eq:integrability2}.
\end{proof}

Next, we prove \eqref{eq:integralsignest}.
\begin{proof}[Proof of \eqref{eq:integralsignest}]
We suppose $\lambda>100$ is very large. Note that
\begin{align}
&\iint_{(\mathbb{R}_+^n\times\mathbb{R}_+^n)\setminus (B_\lambda^+\times B_\lambda^+)}\frac{ (|\xi'|^\gamma+|\zeta'|^\gamma) |\Theta(\xi)-\Theta(\zeta)|^2}{|\xi-\zeta|^{n+2\sigma}}\ud\xi \ud\zeta \nonumber \\
&\le 4 \iint_{(\mathbb{R}_+^n\setminus B_\lambda^+)\times\mathbb{R}_+^n}\frac{ |\xi'|^\gamma |\Theta(\xi)-\Theta(\zeta)|^2}{|\xi-\zeta|^{n+2\sigma}}\ud\xi \ud\zeta. \label{eq:integrability2-1-2}
\end{align}
Again, using the Cauchy-Schwarz inequality, we have
\begin{align}
&\int_{\R^n_+\setminus B_{\lda}^+} |\xi'|^\gamma\,\ud \xi \int_{\{\zeta\in\R^n_+:\, |\zeta-\xi|\ge 1\}}\frac{|\Theta(\xi)-\Theta(\zeta)|^2}{|\xi-\zeta|^{n+2\sigma}}\,\ud\zeta\nonumber\\
&\le C \int_{\R^n_+\setminus B_{\lda}^+} |\xi'|^\gamma \Theta(\xi)^2\,\ud \xi + \int_{\R^n\setminus B_{\lda}^+} |\xi'|^\gamma\,\ud \xi \int_{\{\zeta\in\R^n_+:\, |\zeta-\xi|\ge 1\}}\frac{2\Theta(\zeta)^2}{|\xi-\zeta|^{n+2\sigma}}\,\ud\zeta\label{eq:integrability2-2-2}.
\end{align}
Since $\gamma<2\sigma$, we have
\begin{align}
\int_{\R^n_+\setminus B_{\lda}^+} |\xi'|^\gamma\,\ud \xi \int_{\{\zeta\in\R^n_+:\, |\zeta-\xi|\ge 1,\ |\zeta|\le |\xi|/2\}}\frac{2\Theta(\zeta)^2}{|\xi-\zeta|^{n+2\sigma}}\,\ud\zeta  \le C \int_{\mathbb{R}_+^n\setminus B_{\lambda}^+} \frac{|\xi'|^\gamma}{|\xi|^{n+2\sigma}}\,\ud \xi \le C\lambda^{\gamma-2\sigma},\nonumber
\end{align}
and
\begin{align}
\int_{\R^n_+\setminus B_{\lambda}^+} |\xi'|^\gamma\,\ud \xi \int_{\{\zeta\in\R^n_+:\, |\zeta-\xi|\ge 1,\ |\zeta|\ge |\xi|/2\}}\frac{2\Theta(\zeta)^2}{|\xi-\zeta|^{n+2\sigma}}\,\ud\zeta  \le C \int_{\R^n_+\setminus B_{\lambda}^+} \frac{|\xi'|^\gamma} {|\xi|^{2n-2}}\,\ud \xi\le C\lambda^{\gamma-2\sigma},\nonumber
\end{align}
where we used \eqref{eq:minimizerbound} and $n\ge 4>2+2\sigma$.  Hence, it follows from \eqref{eq:integrability2-2-2} that 
\begin{align}
&\int_{\R^n_+\setminus B_{\lda}^+} |\xi'|^\gamma\,\ud \xi \int_{\{\zeta\in\R^n_+:\, |\zeta-\xi|\ge 1\}}\frac{|\Theta(\xi)-\Theta(\zeta)|^2}{|\xi-\zeta|^{n+2\sigma}}\,\ud\zeta \le C\lambda^{\gamma-2\sigma}.\label{eq:integrability2-3-2}
\end{align}

Finally, if we denote $\widetilde \Theta(\xi)=\Theta(\xi) \xi_{n}^{1-2\sigma}$, then
\begingroup
\allowdisplaybreaks
\begin{align}
&\int_{\R^n_+\setminus B_{\lda}^+} |\xi'|^\gamma\,\ud \xi \int_{\{\zeta\in\R^n_+:\, |\zeta-\xi|\le 1\}}\frac{|\Theta(\xi)-\Theta(\zeta)|^2}{|\xi-\zeta|^{n+2\sigma}}\,\ud\zeta\nonumber\\
&= \int_{\R^n_+\setminus B_{\lda}^+} |\xi'|^\gamma\,\ud \xi \int_{\{\zeta\in\R^n_+:\, |\zeta-\xi|\le 1\}}\frac{|\widetilde\Theta(\xi) \xi_{n}^{2\sigma-1}-\widetilde\Theta(\zeta)\zeta_n^{2\sigma-1}|^2}{|\xi-\zeta|^{n+2\sigma}}\,\ud\zeta\nonumber\\
&\le \int_{\R^n_+\setminus B_{\lda}^+} |\xi'|^\gamma\widetilde\Theta(\xi)^2 \,\ud \xi \int_{\{\zeta\in\R^n_+:\, |\zeta-\xi|\le 1\}}\frac{| \xi_{n}^{2\sigma-1}-\zeta_n^{2\sigma-1}|^2}{|\xi-\zeta|^{n+2\sigma}}\,\ud\zeta\nonumber\\
&\quad+ \int_{\R^n_+\setminus B_{\lda}^+} |\xi'|^\gamma\,\ud \xi \int_{\{\zeta\in\R^n_+:\, |\zeta-\xi|\le 1\}}\frac{|\widetilde\Theta(\xi)-\widetilde\Theta(\zeta)|^2 \zeta_n^{4\sigma-2}}{|\xi-\zeta|^{n+2\sigma}}\,\ud\zeta.\label{eq:integrability2-4-2}
\end{align}
\endgroup
Note that if $\xi_n\ge 3/2$, then 
\begin{align}
\int_{\{\zeta\in\R^n_+:\, |\zeta-\xi|\le 1\}}\frac{| \xi_{n}^{2\sigma-1}-\zeta_n^{2\sigma-1}|^2}{|\xi-\zeta|^{n+2\sigma}}\,\ud\zeta\le C \xi_n^{2\sigma-2}<C. \nonumber
\end{align}
Now let us consider $\xi_n< 3/2$. Then
\begingroup
\allowdisplaybreaks
\begin{align}
\int_{\{\zeta\in\R^n_+:\, |\zeta-\xi|\le \frac{\xi_n}{2}\}}\frac{| \xi_{n}^{2\sigma-1}-\zeta_n^{2\sigma-1}|^2}{|\xi-\zeta|^{n+2\sigma}}\,\ud\zeta &\le C \xi_n^{2\sigma-2}, \nonumber\\
\int_{\{\zeta\in\R^n_+:\, \frac{\xi_n}{2}<|\zeta-\xi|\le 1,\ \zeta_n<2\xi_n \}}\frac{| \xi_{n}^{2\sigma-1}-\zeta_n^{2\sigma-1}|^2}{|\xi-\zeta|^{n+2\sigma}}\,\ud\zeta &\le C \xi_n^{4\sigma-2}\cdot \xi_n^{-2\sigma}=C \xi_n^{2\sigma-2}, \nonumber\\
\int_{\{\zeta\in\R^n_+:\, \frac{\xi_n}{2}<|\zeta-\xi|\le 1,\ \zeta_n\ge 2\xi_n \}}\frac{| \xi_{n}^{2\sigma-1}-\zeta_n^{2\sigma-1}|^2}{|\xi-\zeta|^{n+2\sigma}}\,\ud\zeta &\le C\int_{\{\zeta\in\R^n_+:\, |\zeta|\le 1,\ \zeta_n\ge \xi_n \}}\frac{\zeta_n^{4\sigma-2}}{|\zeta|^{n+2\sigma}}\,\ud\zeta \le C \xi_n^{2\sigma-2}.\nonumber
\end{align}
\endgroup
Therefore,
\begin{align*}
 \int_{\R^n_+\setminus B_{\lda}^+} |\xi'|^\gamma\widetilde\Theta(\xi)^2 \,\ud \xi \int_{\{\zeta\in\R^n_+:\, |\zeta-\xi|\le 1\}}\frac{| \xi_{n}^{2\sigma-1}-\zeta_n^{2\sigma-1}|^2}{|\xi-\zeta|^{n+2\sigma}}\,\ud\zeta & \le   C\int_{\R^n_+\setminus B_{\lda}^+} |\xi'|^\gamma \xi_n^{2\sigma-2}\widetilde\Theta(\xi)^2 \,\ud \xi\\
 &\le C\lambda^{\gamma-2\sigma}, 
\end{align*}
where we used \eqref{eq:minimizerbound} and $n\ge 4>4\sigma$ in the last inequality. Furthermore, using \eqref{eq:gradientbound}, we have
\begin{align*}
\int_{\R^n_+\setminus B_{\lda}^+} |\xi'|^\gamma\,\ud \xi \int_{\{\zeta\in\R^n_+:\, |\zeta-\xi|\le 1\}}\frac{|\widetilde\Theta(\xi)-\widetilde\Theta(\zeta)|^2 \zeta_n^{4\sigma-2}}{|\xi-\zeta|^{n+2\sigma}}\,\ud\zeta \le C \int_{\R^n_+\setminus B_{\lda}^+} |\xi'|^\gamma \frac{1}{|\xi|^{2n}}\,\ud \xi\le C\lambda^{\gamma-2\sigma}. 
\end{align*}
Hence, it follows from \eqref{eq:integrability2-4-2} that
\[
\int_{\R^n_+\setminus B_{10}^+} |\xi'|^\gamma\,\ud \xi \int_{\{\zeta\in\R^n_+:\, |\zeta-\xi|\le 1\}}\frac{|\Theta(\xi)-\Theta(\zeta)|^2}{|\xi-\zeta|^{n+2\sigma}}\,\ud\zeta \le C\lambda^{\gamma-2\sigma}.
\]
This, together with \eqref{eq:integrability2-1-2} and \eqref{eq:integrability2-3-2}, proves \eqref{eq:integrability2}.
\end{proof}

\bigskip

\noindent R. L. Frank

\noindent Mathematisches Institut, Ludwig-Maximilans Universit\"at M\"unchen, Theresienstr. 39, 80333 M\"unchen, Germany, Munich Center for Quantum Science and Technology, Schellingstr. 4, 80799, M\"unchen, Germany, and Mathematics 253-37, Caltech, Pasadena, CA 91125, USA\\
Email: \textsf{frank@math.lmu.de, rlfrank@caltech.edu}

\medskip

\noindent T. Jin

\noindent Department of Mathematics, The Hong Kong University of Science and Technology\\
Clear Water Bay, Kowloon, Hong Kong\\
Email: \textsf{tianlingjin@ust.hk}

\medskip

\noindent W. Wang

\noindent School of Mathematical Sciences, Peking University, Beijing, China \\
Email: \textsf{2201110024@stu.pku.edu.cn}


\begin{thebibliography}{99}

\bibitem{AL} F. J. Almgren and E. H. Lieb, 
\textit{Symmetric decreasing rearrangement is sometimes continuous.} J. Amer. Math. Soc. \textbf{2} (1989), no.4, 683--773.

\bibitem{BBC} K. Bogdan, K. Burdzy and Z.-Q. Chen, \textit{Censored stable processes}. Probab. Theory Relat. Fields \textbf{127} (2003), no. (1), 89--152.

\bibitem{BrNi} H. Br\'ezis and L. Nirenberg, \textit{Positive solutions of nonlinear elliptic equations involving critical Sobolev exponents}. Comm. Pure Appl. Math. \textbf{36} (1983), no. 4, 437--477.

\bibitem{CLL} W. Chen, C. Li and Y. Li, \emph{A direct method of moving planes for the fractional Laplacian}. Adv. Math. \textbf{308} (2017), 404--437.

\bibitem{Dy} B. Dyda, \textit{A fractional order Hardy inequality}. Illinois J. Math. \textbf{48} (2004), no. 2, 575--588. 

\bibitem{DyFr} B. Dyda and R. L. Frank, \textit{Fractional Hardy--Sobolev--Maz’ya inequality for domains}. Studia Math. \textbf{208} (2012), no. 2, 151--166.

\bibitem{FRO} M. M. Fall and X. Ros-Oton, 
\textit{Global Schauder theory for minimizers of the $H^s(\Omega)$ energy.} J. Funct. Anal. \textbf{283} (2022), no.3, Paper No. 109523, 50 pp.

\bibitem{FT} M. M. Fall and R. Y. Temgoua, 
\textit{Existence results for nonlocal problems governed by the regional fractional Laplacian.}
NoDEA Nonlinear Differential Equations Appl. \textbf{30} (2023), no.2, Paper No. 18, 32 pp.

\bibitem{FJX} R. L. Frank, T. Jin and J. Xiong, \emph{Minimizers for the fractional Sobolev inequality on domains}.  Calc. Var. Partial Differential Equations \textbf{57} (2018), no. 2, Paper No. 43, 31 pp.

\bibitem{FS} R. L. Frank and R. Seiringer, 
\emph{Non-linear ground state representations and sharp Hardy inequalities.} J. Funct. Anal. \textbf{255} (2008), no.12, 3407--3430.

\bibitem{Guan} Q.-Y. Guan, \textit{Integration by parts formula for regional fractional Laplacian}. Commun. Math. Phys. \textbf{266} (2006), no.2, 289--329.

\bibitem{GuanMa} Q.-Y. Guan and Z.-M. Ma, \textit{Reflected symmetric $\alpha$-stable processes and regional fractional Laplacian}. Probab. Theory Relat. Fields \textbf{134} (2006), no.4, 649--694.

\bibitem{Lieb} E. H. Lieb, \textit{Sharp constants in the Hardy--Littlewood--Sobolev and related inequalities}. Ann. of Math. (2) \textbf{118} (1983), 349--374.

\bibitem{LiebLoss} E. H. Lieb and M. Loss, Analysis. Second edition. Graduate Studies in Mathematics, 14. American Mathematical Society, Providence, RI, 2001. xxii+346 pp.

\bibitem{MN} R. Musina and A.I. Nazarov, 
\textit{Sobolev inequalities for fractional Neumann Laplacians on half spaces.}
Adv. Calc. Var. \textbf{14} (2021), no. 1, 127--145.

\end{thebibliography}
\end{document}